\newcommand{\QCB}{\mathsf{QCB}}
\newcommand{\QCBZ}{\mathsf{QCB_0}}
\newcommand{\Seq}{\mathsf{Seq}}
\newcommand{\omegaScottTop}{\tau_{\omega\mathrm{Scott}}}
\newcommand{\ScottTop}{\tau_{\mathrm{Scott}}}
\newcommand{\COTop}{\tau_{\mathrm{CO}}}
\newcommand{\omegaCOTop}{\tau_{\omega\mathrm{CO}}}
\newcommand{\CC}{\mathsf{C}} 
\newcommand{\calF}{\mathcal{F}}
\newcommand{\calK}{\mathcal{K}}
\newcommand{\calO}{\mathcal{O}}
\newcommand{\INu}{{\mathbb{N}_\infty}}
\newcommand{\IN}{\mathbb{N}}
\newcommand{\IS}{\mathbb{S}}
\newcommand{\Cls}{\mathit{Cls}}
\newcommand{\cf}{\mathit{cf}}
\newcommand{\KuV}{{\mathcal{K}_\mathrm{uV}}}
\newcommand{\calOOp}{{\mathcal{O\!O}_{\!+}}}
\newtheorem{lemma}{Lemma}[section]
\newtheorem{theorem}[lemma]{Theorem}
\newtheorem{corollary}[lemma]{Corollary}
\newtheorem{proposition}[lemma]{Proposition}
\newtheorem{example}[lemma]{Example}
\newenvironment{proof}{{\noindent\bf Proof.}}{\hspace*{\fill}$\Box$\par\bigskip}
\renewcommand{\labelenumi}{{\rm(\theenumi)}}
\begin{document}

\title{A Hofmann-Mislove Theorem for Scott open sets}

\author{Matthias Schr\"oder, TU Darmstadt}

\date{\today}

\maketitle

\begin{abstract}
We consider the intersection map 
on the family of non-empty $\omega$-Scott-open sets of the lattice of opens 
of a topological space.
We prove that in a certain class of topological spaces
the intersection map forms a continuous retraction 
onto the space of countably compact subsets of the space
equipped with (the sequentialisation of) the upper Vietoris topology.
This class consists of all sequential spaces which are sequentially Hausdorff.
\end{abstract}

\section{Introduction} \label{sec:intro}

The Hofmann-Mislove Theorem states
that in a sober space $X$ the intersection map $\calF \mapsto \bigcap(\calF)$ is a bijection between 
the non-empty Scott-open filters on the lattice of open subsets of $X$
and the compact saturated subsets of $X$ (cf.\ \cite{GH80}). 
The natural question arises in which cases (and how) this result can be generalised
to all non-empty Scott-open collection of opens.

We give a positive answer for sequentially Hausdorff sequential spaces.
Remember that a space is \emph{sequentially Hausdorff}, if every convergent sequence has a unique limit.
Our version of the Hofmann-Mislove Theorem states 
that the intersection map 
defined on the family $\calOOp(X)$ of all non-empty $\omega$-Scott-open collections of opens
is a continuous retraction onto the space $\calK(X)$ of all countably-compact subsets in such spaces $X$.
The corresponding topologies are the $\omega$-Scott topology on $\calOOp(X)$
and the sequentialisation of the upper Vietoris topology on $\calK(X)$.
For the proof we only use Dependent Choice (DC), 
whereas the classical Hofmann-Mislove Theorem needs the Axiom of Choice (AC).

Beforehand we summerise in Section~\ref{sec:prelim} some properties of sequential spaces,
of the $\omega$-Scott topology on open subsets, 
of the upper Vietoris topology on countably compact sets,
and of sequential Hausdorffness.


\section{Sequential spaces, hyperspaces, sequentially Hausdorff spaces}\label{sec:prelim}

\subsection{Sequential spaces and qcb-spaces}

A \emph{sequential space} is a topological space in which every sequentially open subset is open
(cf.\ \cite{En89}.
The category $\Seq$ of sequential spaces and continuous functions as morphisms is cartesian closed.
The following lemma explains how function spaces $Y^X$ are formed in $\Seq$ 
(see e.g. \cite{ELS04, Sch:phd} for details).

\begin{lemma}\label{l:funcspaces:Seq}
 Let $X,Y$ be sequential spaces. 
 Then the function space $Y^X$ formed in $\Seq$ 
 has the set $C(X,Y)$ of continuous functions from $X$ to $Y$
 as underlying set and its topology is equal to both
 \begin{itemize}
  \item[(a)] 
    the sequentialisation of the compact-open topology on $C(X,Y)$ and
  \item[(b)]  
    the sequentialisation of the countably-compact-open topology on $C(X,Y)$.
 \end{itemize}
 The convergence relation of $Y^X$ is continuous convergence, 
 i.e., $(f_n)_n$ converges to $f_\infty$ in $Y^X$
 iff $(f_n(x_n))_n$ converges to $f_\infty(x_\infty)$ in $Y$
 whenever $(x_n)_n$ converges to $x_\infty$ in $X$.
 Equivalently, $(f_n)_n$ converges to $f_\infty$ in $Y^X$ iff 
 the transpose $\hat{f}:\INu \times X \to Y$ defined by $\hat{f}(n,x)=f_n(x)$
 is continuous, where $\INu$ denotes the one-point compactification of $\IN$.
\end{lemma}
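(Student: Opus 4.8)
The plan is to \emph{recognise} the exponential $Y^X$ using only that $\Seq$ is cartesian closed, rather than to build it by hand. First I would fix the underlying set: since $\Seq$ is a full subcategory of $\mathbf{Top}$ on the same morphisms we have $\Seq(A,B)=C(A,B)$, and since the one-point space $1$ is terminal while the topological product $1\times X\cong X$ is already sequential (hence coincides with the $\Seq$-product), the points of $Y^X$ are $\Seq(1,Y^X)\cong\Seq(1\times X,Y)=C(X,Y)$. Next I would pin down the convergent sequences. As $Y^X$ is an object of $\Seq$, its topology is determined by its convergent sequences, and in any space $Z$ a sequence $(z_n)_n$ converges to $z_\infty$ exactly when the evident map $\INu\to Z$ (sending $n\mapsto z_n$ and $\infty\mapsto z_\infty$) is continuous. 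Applying the exponential adjunction of $\Seq$ at $\INu$, such maps into $Y^X$ correspond bijectively to continuous maps out of the $\Seq$-product $\INu\times X$; the crucial point is that this $\Seq$-product \emph{coincides with the topological product}, because $\INu$ is locally compact and metrizable, so $\INu\times X$ is sequential whenever $X$ is. Hence $(f_n)_n\to f_\infty$ in $Y^X$ iff $\hat f\colon\INu\times X\to Y$ is continuous. Since $\INu\times X$ is sequential this is the same as $\hat f$ being sequentially continuous; as the convergent sequences of $\INu\times X$ are precisely the $(m_j,x_j)_j$ with $m_j\to m_\infty$ in $\INu$ and $x_j\to x_\infty$ in $X$, unpacking this---with the case $m_\infty\in\IN$ absorbed into continuity of the individual $f_k$ and the case $m_\infty=\infty$ handled (up to routine thinning) by the ``diagonal'' sequences $j\mapsto(j,x_j)$---gives exactly the stated continuous-convergence condition.

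For the two topological descriptions I would show that the $\Seq$-exponential topology, the compact-open topology $\tau_{\mathrm{co}}$ and the countably-compact-open topology $\tau_{\mathrm{cco}}$ on $C(X,Y)$ all induce the same convergence of sequences; since sequentialisation does not alter convergent sequences and since two sequential topologies with the same convergent sequences coincide, both (a) and (b) follow at once. Because every compact set is countably compact we have $\tau_{\mathrm{co}}\subseteq\tau_{\mathrm{cco}}$, so it suffices to establish the cycle
\[
  Y^X\text{-conv.}\ \Longrightarrow\ \tau_{\mathrm{cco}}\text{-conv.}\ \Longrightarrow\ \tau_{\mathrm{co}}\text{-conv.}\ \Longrightarrow\ Y^X\text{-conv.},
\]
the middle implication being immediate from $\tau_{\mathrm{co}}\subseteq\tau_{\mathrm{cco}}$.

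For the first implication, suppose $(f_n)_n\to f_\infty$ in $Y^X$ (so $\hat f$ is continuous) but $f_n\notin[A,U]:=\{g\in C(X,Y) : g(A)\subseteq U\}$ for infinitely many $n$, where $A\subseteq X$ is countably compact, $U$ is open and $f_\infty(A)\subseteq U$. Relabelling these $n$ as a strictly increasing sequence $(n_k)_k$ and choosing $a_k\in A$ with $f_{n_k}(a_k)\notin U$, countable compactness yields a cluster point $a\in A$ of $(a_k)_k$, whence $(\infty,a)$ is a cluster point of $\bigl((n_k,a_k)\bigr)_k$ in $\INu\times X$. But $\hat f(\infty,a)=f_\infty(a)\in U$, so the open set $\hat f^{-1}(U)$ is a neighbourhood of $(\infty,a)$ and must contain some $(n_k,a_k)$, contradicting $f_{n_k}(a_k)\notin U$. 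For the last implication, suppose $(f_n)_n\to f_\infty$ in $\tau_{\mathrm{co}}$ and check sequential continuity of $\hat f$ at a convergent sequence $(m_j,x_j)_j\to(m_\infty,x_\infty)$: if $m_\infty\in\IN$ the first coordinate is eventually constant and continuity of $f_{m_\infty}$ finishes it; if $m_\infty=\infty$, fix an open $U\ni f_\infty(x_\infty)$, use continuity of $f_\infty$ to get $j_1$ with $x_j\in f_\infty^{-1}(U)$ for $j\ge j_1$, so $L:=\{x_j : j\ge j_1\}\cup\{x_\infty\}$ is compact with $f_\infty(L)\subseteq U$; then $[L,U]$ is a $\tau_{\mathrm{co}}$-neighbourhood of $f_\infty$, hence $f_n(L)\subseteq U$ for all large $n$, and since $m_j\to\infty$ we get $f_{m_j}(x_j)\in U$ for all large $j$.

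The step I expect to be the main obstacle is the passage through the $\Seq$-product: everything hinges on identifying the categorical product $\INu\times X$ with the topological one (equivalently, on the sequentiality of $\INu\times X$) and on the routine-but-fiddly translation between continuity of the transpose $\hat f$ and the elementary continuous-convergence statement. Once these are in place, the remaining implications are short and no Hofmann--Mislove-type argument enters at this stage.
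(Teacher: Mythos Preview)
The paper does not give its own proof of this lemma; it is stated with a pointer to \cite{ELS04,Sch:phd} for details, so there is nothing to compare against line by line. Your argument is correct and is essentially the standard one underlying those references: identify convergent sequences in $Y^X$ with continuous maps $\INu\to Y^X$, transpose via cartesian closedness of $\Seq$, and use that the $\Seq$-product $\INu\times X$ agrees with the Tychonoff product because $\INu$ is locally compact (so $\INu\times(-)$ preserves the quotient presentation of a sequential space by a metric space, making $\INu\times X$ sequential). The cycle of implications among continuous convergence, $\tau_{\mathrm{co}}$-convergence and $\tau_{\mathrm{cco}}$-convergence is handled cleanly; in particular, your use of a \emph{cluster point} (rather than a convergent subsequence) in the countably-compact step is exactly right, since countable compactness guarantees cluster points but not convergent subsequences in general, and the contradiction only needs that $\hat f^{-1}(U)$ meets the sequence $((n_k,a_k))_k$ infinitely often.

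The one place that deserves an extra sentence is the ``routine thinning'' you flag when passing between continuity of $\hat f$ at sequences $(m_j,x_j)\to(\infty,x_\infty)$ with an arbitrary $m_j\to\infty$ in $\INu$ and the diagonal formulation $f_n(x_n)\to f_\infty(x_\infty)$: one must split off the indices with $m_j=\infty$ (handled by continuity of $f_\infty$), thin the remaining $m_j$ to a strictly increasing sequence, and then reindex by padding with $x_\infty$ to manufacture a sequence $(y_n)_n\to x_\infty$ to which the diagonal hypothesis applies. This is indeed routine, but since it is the only genuinely sequential manoeuvre in the whole proof it is worth writing out once.
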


\noindent
Remember that the sequentialisation of a topology is 
the family of all sequentially open sets pertaining to that topology.
Binary and countable products in $\Seq$ are formed 
by taking the sequential coreflection (sequentialisation) of the usual Tychonoff product. 
We denote the binary product in $\Seq$ by $X \times Y$.
 
A \emph{qcb-space} is a quotient of a countably based space.
The category $\QCB$ of qcb-spaces forms a cartesian closed subcategory of $\Seq$;
countable products and function spaces are inherited from $\Seq$ (cf. \cite{ELS04,Sch:phd}).
The same is true for the full category $\QCBZ$ of qcb-spaces that satisfy the $T_0$-property.


\subsection{Topologies on open subsets}\label{sub:open}

For a given sequential space $X$ let $\calO(X)$ denote the family of open subsets of $X$.
On $\calO(X)$ we consider four natural topologies.
For us, the most important topology is the $\omega$-Scott topology $\omegaScottTop$.
A subset $H \subseteq \calO(X)$ is called \emph{$\omega$-Scott open},
if $H$ is upwards closed in the complete lattice $(\calO(X);\subseteq)$ and $\mathcal{D}\cap H \neq \emptyset$
for each countable directed subset $\mathcal{D}$ with $\bigcup\mathcal{D}\in H$.
The second topology is the Scott topology $\ScottTop$ on $(\calO(X);\subseteq)$
which is defined like the $\omega$-Scott topology, except for considering all directed families of opens,
not only all countable ones.
Clearly, the $\omega$-Scott-topology refines the Scott topology.

The third topology on $\calO(X)$ is the \emph{compact-open topology} $\COTop$.
A basis of $\COTop$ is given by sets of the form $K^\subseteq :=\{U\in\calO(X) \,|\, K\subseteq U\}$,
where $K$ runs through the compact subsets of $X$.
The name ``compact-open topology'' is motivated by the fact that
it coincides with the usual compact-open topology on the function space $\IS^X$,
under the natural identification of an open subset $U\subseteq X$ with its continuous
characteristic function $\cf(U)$.
Here $\IS$ denotes the Sierpi{\'n}ski space which has $\{\top,\bot\}$ as its underlying set
and in which $\{\top\}$, but not $\{\bot\}$ is open.
The map $\cf(U) \colon X \to \IS$ is defined by $\cf(U)(x)=\top :\Longleftrightarrow x \in U$.
Obviously, $\cf\colon U \mapsto \cf(U)$ constitutes a bijection between $\calO(X)$ and $\IS^X$.

The fourth topology on $\calO(X)$ is the \emph{countably-compact-open topology} $\omegaCOTop$.
It is defined like the compact-open topology, except for considering all countably compact subsets $K$.
Remember that $K \subseteq X$ is called \emph{countably compact}, 
if every countable open cover of $K$ contains a finite subcover of $K$.

Clearly, $\COTop\subseteq\ScottTop \subseteq \omegaScottTop$ and $\COTop\subseteq \omegaCOTop \subseteq \omegaScottTop$. 
For hereditarily Lindel{\"of} spaces (and thus for qcb-spaces)
we have $\ScottTop=\omegaScottTop$ and $\COTop = \omegaCOTop$.

Now we present some properties of the convergence relations induced by these topologies.
Most of the statements of Proposition~\ref{p:calO} belong to the folklore of sequential space,
nevertheless it seems worth collecting their proofs here.

\begin{proposition}\label{p:calO} 
 Let  $X$ be a sequential space.
\begin{enumerate}
 \item \label{i:calO:convrel:equal}
  The convergence relations induced on the set $\calO(X)$
  by the Scott topology, 
  by the $\omega$-Scott topology,
  by the compact-open topology,
  and by the countably-compact-open topology 
  coincide.
 \item \label{i:calO:cup:cap}
  A sequence $(U_n)_n$ converges to $U_\infty$ with respect to the compact-open topology
  if, and only if,
  \[
    W_k:=\bigcap\limits_{n \geq k}U_n \cap U_\infty \in\calO(X)  \text{ for all $k \in \IN$}
    \quad\text{and}\quad
    U_\infty=\bigcup\limits_{k \in \IN}W_k.
  \]
 \item \label{i:calO:convrel}
  A sequence $(U_n)_n$ converges to $U_\infty$ w.r.t.\ the compact-open topology  
  if, and only if,
  for every sequence $(x_n)_n$ converging in $X$ to some element in $U_\infty$
  there is some $n_0 \in \IN$ such that $x_n \in U_n$ for all $n \geq n_0$.  
\item \label{i:calO:sequentialisation}
  The sequentialisation
  of each of the four topologies mentioned in \eqref{i:calO:convrel:equal} is equal to $\omegaScottTop$.    
 \item \label{i:calO:homeomorphic}
  The space $\calO(X)$ equipped with the $\omega$-Scott topology is homeomorphic to $\IS^X$
  via the map $\cf$.
 \item \label{i:calO:qcb}
  If $X$ is a qcb-space, then the Scott topology on $\calO(X)$ is a qcb$_0$-topology.
\end{enumerate}
\end{proposition}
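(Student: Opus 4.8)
I would prove the six statements in the order \eqref{i:calO:convrel}, \eqref{i:calO:cup:cap}, \eqref{i:calO:convrel:equal}, \eqref{i:calO:sequentialisation}, \eqref{i:calO:homeomorphic}, \eqref{i:calO:qcb}, each building on the previous ones; only Dependent Choice is needed. For \eqref{i:calO:convrel}: under the bijection $\cf$, the topologies $\COTop$ and $\omegaCOTop$ on $\calO(X)$ correspond to the compact-open and the countably-compact-open topology on $C(X,\IS)$. A topology and its sequentialisation always have the same convergent sequences (a finer topology has no more convergent sequences, and if $(V_n)_n$ converges with respect to $\tau$ then it converges with respect to any topology all of whose members are $\tau$-sequentially open). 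By Lemma~\ref{l:funcspaces:Seq} the sequentialisation of both topologies on $C(X,\IS)$ is the topology of the function space $\IS^X$ in $\Seq$, whose convergence relation is continuous convergence; and since a sequence in $\IS$ converges to $\bot$ unconditionally and to $\top$ iff it is eventually $\top$, ``$(\cf(U_n))_n$ converges continuously to $\cf(U_\infty)$'' unwinds precisely to the condition in \eqref{i:calO:convrel}. This proves \eqref{i:calO:convrel}, and in particular that $\COTop$ and $\omegaCOTop$ induce the same convergence relation on $\calO(X)$.

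For \eqref{i:calO:cup:cap}, one direction is routine: the $W_k$ form an increasing chain of opens with union $U_\infty$, so any compact $K\subseteq U_\infty$ lies in some $W_{k_0}$ and hence in all $U_n$ with $n\ge k_0$, and the sets $K^\subseteq$ ($K\subseteq U_\infty$ compact) form a neighbourhood basis of $U_\infty$ in $\COTop$. For the converse I would use \eqref{i:calO:convrel}: $U_\infty=\bigcup_k W_k$ follows by applying \eqref{i:calO:convrel} to constant sequences, and the substance is that each $W_k=U_\infty\cap\bigcap_{n\ge k}U_n$ is open. Since $X$ is sequential it suffices that $W_k$ be sequentially open, so suppose $(x_j)_j$ converges to some $x_\infty\in W_k$ with $x_j\notin W_k$ for infinitely many $j$; as $U_\infty$ is open we may assume all $x_j\in U_\infty$, whence for those $j$ there is $n(j)\ge k$ with $x_j\notin U_{n(j)}$. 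If the $n(j)$ were bounded, some fixed $U_{n^*}$ with $n^*\ge k$ would be missed by infinitely many $x_j$, contradicting $x_\infty\in U_{n^*}$ and the openness of $U_{n^*}$; hence the $n(j)$ are unbounded, and passing to a subsequence $y_m=x_{j_m}$ with $n(j_m)$ strictly increasing, the set $K=\{y_m:m\in\IN\}\cup\{x_\infty\}$ is a compact subset of $U_\infty$ with $K\not\subseteq U_{n(j_m)}$ for all $m$, contradicting $\COTop$-convergence of $(U_n)_n$. (Alternatively, with $\hat f\colon\INu\times X\to\IS$ the continuous transpose of $(\cf(U_n))_n$ from Lemma~\ref{l:funcspaces:Seq}, one has $W_k=\{x\in X:\hat f(n,x)=\top\text{ for all }n\in\{k,k+1,\dots\}\cup\{\infty\}\}$, which is open by a tube-lemma argument over that compact subset of $\INu$.) This openness of $W_k$ is the step I expect to be the main obstacle, and the only point where sequentiality of $X$ is genuinely used.

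The remaining parts are formal. For \eqref{i:calO:convrel:equal}, the inclusions $\COTop\subseteq\ScottTop\subseteq\omegaScottTop$ and $\COTop\subseteq\omegaCOTop\subseteq\omegaScottTop$ give that $\omegaScottTop$-convergence implies convergence in all four topologies and that convergence in any of them implies $\COTop$-convergence; and $\COTop$-convergence implies $\omegaScottTop$-convergence, because by \eqref{i:calO:cup:cap} the family $\{W_k\}_k$ is a countable directed set of opens with union $U_\infty$, so an $\omega$-Scott-open $H\ni U_\infty$ contains some $W_{k_0}$ and hence, being upwards closed, all $U_n$ with $n\ge k_0$. For \eqref{i:calO:sequentialisation} I would first show $\omegaScottTop$ is sequential: a sequentially $\omegaScottTop$-open $H$ is upwards closed, since the constant sequence at $V$ converges to every open $U\subseteq V$ by \eqref{i:calO:convrel}, and it meets every countable directed $\mathcal D$ with $\bigcup\mathcal D\in H$, since one may replace $\mathcal D$ by an increasing cofinal sequence, which converges to $\bigcup\mathcal D$ by \eqref{i:calO:cup:cap}. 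By monotonicity of sequentialisation the other three sequentialisations lie between that of $\COTop$ and that of $\omegaScottTop$, and the sequentialisation of $\COTop$ already contains $\omegaScottTop$ — this is exactly the computation used for \eqref{i:calO:convrel:equal}, namely that every $\omega$-Scott-open set is sequentially $\COTop$-open.

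Finally, \eqref{i:calO:homeomorphic} follows since $\cf$ carries $\COTop$ to the compact-open topology on $C(X,\IS)$, hence by \eqref{i:calO:sequentialisation} and Lemma~\ref{l:funcspaces:Seq}(a) carries $\omegaScottTop$ to the topology of $\IS^X$ in $\Seq$. And for \eqref{i:calO:qcb}: a qcb-space $X$ satisfies $\ScottTop=\omegaScottTop$, so $(\calO(X),\ScottTop)$ is homeomorphic to $\IS^X$ by \eqref{i:calO:homeomorphic}; since $\IS$ is a countably based $T_0$-space, hence a qcb$_0$-space, and $\QCB$ is closed under function spaces, $\IS^X$ is a qcb-space, and it is $T_0$ (distinct continuous maps into $\IS$ are separated by a subbasic compact-open set), so $\IS^X$ lies in $\QCBZ$ and $\ScottTop$ is a qcb$_0$-topology.
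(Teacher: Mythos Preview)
Your proof is correct and follows essentially the same route as the paper: identify $\COTop$-convergence with continuous convergence on $\IS^X$ via $\cf$, establish openness of the sets $W_k$, exploit the chain of inclusions among the four topologies to close the circle of convergence relations, and verify that $\omegaScottTop$ is already sequential. The only cosmetic difference is that for the openness of $W_k$ you give a direct sequential argument (with the tube-lemma route mentioned as an alternative), whereas the paper goes straight to the tube-lemma approach, citing an external reference.
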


\begin{proof}
\renewcommand{\labelenumi}{{\rm\alph{enumi})}}
\begin{enumerate}
\item
 To show the only-if-parts of~\eqref{i:calO:cup:cap} and \eqref{i:calO:convrel},
 let $(U_n)_n$ converge to $U_\infty$ w.r.t.\ the compact-open topology $\COTop$.
 Then the sequence $(\cf(U_n))_n$ of characteristic functions
 converges to $\cf(U_\infty)$ with respect to the compact-open topology on $\IS^X$.
 By Lemma~\ref{l:funcspaces:Seq}, $(\cf(U_n))_n$ converges continuously to $\cf(U_\infty)$.
 Therefore the universal characteristic function $u\colon \INu \times X \to \IS$
 mapping $(n,x)$ to $\cf(U_n)(x)$ is sequentially continuous.
 As $\{\infty,n \,|\, n\geq k \in \IN\}$ is sequentially compact in $\INu$ for each $k$ 
 and $\{\top\}$ is open, the set
 \[
   W_k= \big\{ x \in X \,\big|\, \forall k \leq n \leq \infty.\, u(n,x) \in \{\top\} \big\}
 \]
 is open in $X$ (see Lemma 2.2.9 in \cite{Sch:phd}).
 Clearly $W_k=\bigcap_{n \geq k} U_n \cap U_\infty$.
 By the continuity of $u$, for any $x \in U_\infty$ there is some $k_x$ such $u(n,x)=\top=u(\infty,x)$
 for all $n \geq k_x$, implying $x \in W_{k_x}$.
 So $(U_n)_{n \leq \infty}$ fulfils the displayed formula in \eqref{i:calO:cup:cap}. 

 Moreover, if $(x_n)_n$ converges to $x \in U_\infty$, then there is some $n_1 \in \IN$ 
 such that $x_n \in W_{k_x}$ for every $n \geq n_1$, 
 implying $x_n \in U_n$ for all $n \geq \max\{k_x,n_1\}$.
 This shows the only-if-part of~\eqref{i:calO:convrel}. 
\item
 Now we prove that the right hand side of \eqref{i:calO:cup:cap}  
 implies that $(U_n)_n$ converges to $U_\infty$ w.r.t.\ the $\omega$-Scott topology $\omegaScottTop$.
 Let $H$ be an $\omega$-Scott-open set containing $U_\infty$.
 Since $(W_k)_k$ is an increasing sequence of opens with $U_\infty=\bigcup_{k \in \IN} W_k$,
 there is some $k_0$ such that $W_{k_0} \in H$, implying $U_n \in H$ for all $n \geq k_0$,
 as required.
\item
 For any countably compact set $K$ the set $K^\subseteq$ is obviously an element of $\omegaScottTop$. 
 Thus $\omegaScottTop$ refines $\omegaCOTop$, 
 which in turn refines $\COTop$, because compact subsets are countably compact.
 So convergence with respect to $\omegaScottTop$
 implies convergence w.r.t.\ $\omegaCOTop$,
 and convergence w.r.t.\ $\omegaCOTop$ implies convergence w.r.t.\ $\COTop$.
 Furthermore $\omegaScottTop$ refines $\ScottTop$,
 which in turn refines $\COTop$, because $K^\subseteq$ is Scott-open for every compact subset $K$ of $X$.
 So convergence w.r.t.\ $\omegaScottTop$
 implies convergence w.r.t.\ $\ScottTop$,
 and convergence w.r.t.\ $\ScottTop$ implies convergence w.r.t.\ $\COTop$.
 \\
 We conclude that all four topologies in \eqref{i:calO:convrel:equal}
 induce the same convergence relation on $\calO(X)$.
 This completes the proofs of~\eqref{i:calO:convrel:equal} and \eqref{i:calO:cup:cap}.
\item
 For a sequence $(U_n)_{n \leq \infty}$  of open sets, 
 the right hand side of Statement~\eqref{i:calO:convrel} is equivalent to saying
 that $(\cf(U_n))_n$ converges continuously to $\cf(U_\infty)$, 
 because in the Sierpinski space $\IS$ every sequence converges to $\bot$.
 By Lemma~\ref{l:funcspaces:Seq}
 the convergence relation induced by the compact-open topology on $\IS^X$ is continuous convergence.
 This proves the if-part of~\eqref{i:calO:convrel}. 
\item
 Let $H$ be a sequentially open in the topological space $(\calO(X),\omegaScottTop)$.
 Then $H$ is upwards closed, as $U \subseteq V$ and $U,V \in \calO(X)$ implies that
 the constant sequence $(V)_n$ converges to $U$ w.r.t.\ $\omegaScottTop$.
 Now let $(U_n)_n$ be an increasing sequence of open sets with $U_\infty:=\bigcup_{n \in \IN} U_n \in H$.
 Then $(U_n)_n$ converges to $U_\infty$ w.r.t.\ the $\omega$-Scott topology, 
 because for every $\omega$-Scott open set $G$ containing $U_\infty$ 
 there is some $m \in \IN$ such that $U_m \in G$, 
 implying $U_n \in G$ for all $n \geq m$, because $G$ is upwards closed.
 As $H$ is sequentially open, we have $U_n \in H$ for almost all $n$.
 We conclude that $H$ is $\omega$-Scott-open and the topology $\omegaScottTop$ is sequential.
\item
 As the four topologies induce the same convergence relation as $\omegaScottTop$
 by~\eqref{i:calO:convrel:equal} and $\omegaScottTop$ is sequential,
 the sequentialisation of either of them is equal to $\omegaScottTop$.
 This shows~\eqref{i:calO:sequentialisation}.
\item
 The bijection $\cf\colon \calO(X) \to \IS^X$ and its inverse are both
 continuous functions w.r.t.\ the corresponding compact-open topologies 
 on the sets $\calO(X)$ and $\IS^X$.
 Since $\omegaScottTop$ is the sequentialisation of $\COTop$ by \eqref{i:calO:sequentialisation}
 and $\IS^X$ carries the sequentialisation of the compact-open topology on $C(X,\IS)$
 by Lemma~\ref{l:funcspaces:Seq},
 $\cf$ and its inverse are also continuous w.r.t.\ this pair of sequential topologies.
 Hence $\cf$ is a homeomorphism between $\calO(X)$ equipped with the $\omega$-Scott topology and $\IS^X$.
 This shows~\eqref{i:calO:homeomorphic}.
\item
 If $X$ is a qcb-space, then $X$ is hereditarily Lindel\"of (see \cite[Proposition 3.3.1]{Sch:phd}).
 Therefore $\ScottTop=\omegaScottTop$.
 Since $(\calO(X),\omegaScottTop)$ is homeomorphic to $\IS^X$ by~\eqref{i:calO:homeomorphic}, 
 it is a qcb$_0$-space.
 This shows~\eqref{i:calO:qcb}.
\end{enumerate}
\end{proof}

In view of this proposition, we henceforth denote by $\calO(X)$
the sequential space consisting of the opens of $X$ as its underlying set
and carrying the $\omega$-Scott topology $\omegaScottTop$.
Note that the $\omega$-Scott topology forms the underlying set of the sequential space $\calO(\calO(X))$,
which itself carries the $\omega$-Scott topology defined on the complete lattice $(\calO(\calO(X)); \subseteq)$.
Moreover, $\calO(\calO(X))$ is a qcb$_0$-space, if $X$ is a qcb-space 
by Proposition~\ref{p:calO}\eqref{i:calO:qcb}.


\subsection{The upper Vietoris topology on compact sets}\label{sub:compact}

Now we dicuss a natural topology on the set of compact subsets of $X$, 
known as the \emph{upper Vietoris topology}.
In view of the results of Section~\ref{sec:HofmannMislove} 
it makes sense to consider the larger family of countably compact subsets of $X$.
We denote the  latter by $\calK(X)$.
For a qcb$_0$-space $X$, however, $\calK(X)$ coincides with the family of compact subsets,
because $X$ is hereditarily Lindel\"of.

The \emph{upper Vietoris topology} is 
generated by the subbasic open sets $\Box U := \{K\in \calK(X) \,|\, K\subseteq U\}$,
where $U$ varies over the open subsets of $X$.
A related topology is the \emph{Vietoris topology}.
It has as a subbasis the sets $\Box U$ and 
$\Diamond U := \{K\in \calK(X) \,|\, K\cap U \neq \emptyset\}$, where again $U\in\calO(X)$. 

We do not know whether any of these topologies is sequential.
However, both topologies are countably based and thus sequential, 
provided that $X$ has a countable basis.
By $\KuV(X)$ we denote the space of countably compact subsets of $X$ 
equipped with the sequentialisation of the upper Vietoris topology.
If $X$ is $T_1$, then $\KuV(X)$ is a $T_0$-space;
if $X$ is a qcb-space, then $\KuV(X)$ is a qcb-space as well (cf.\ Section 4.4.3 in \cite{Sch:phd}).

The countably-compact-open topology on the set of continuous functions from $X$ to $Y$
is defined by the subbasis of open sets $\CC(K,U):=\{f\in Y^X \,|\,f[K]\subseteq U\}$,
where $K$ is countably compact and $U\in\calO(Y)$.
By Lemma~\ref{l:funcspaces:Seq}, $\CC(K,U)$ is open in the exponential $Y^X$ formed in $\Seq$.
We show that this construction is continuous.

\pagebreak[3]
\begin{proposition}\label{p:comp-op}
For two sequential spaces $X,Y$, the map $\CC$ is a sequentially continuous function
from $\KuV(X)\times\calO(Y)$ to $\calO(Y^X)$.
\end{proposition}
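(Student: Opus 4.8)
The plan is to prove sequential continuity directly by unwinding what convergence means in each of the three spaces involved. Suppose $(K_n)_n$ converges to $K_\infty$ in $\KuV(X)$ and $(U_n)_n$ converges to $U_\infty$ in $\calO(Y)$; we must show that $(\CC(K_n,U_n))_n$ converges to $\CC(K_\infty,U_\infty)$ in $\calO(Y^X)$. By Proposition~\ref{p:calO}\eqref{i:calO:convrel} applied to the sequential space $Y^X$, it suffices to show: for every sequence $(f_n)_n$ converging in $Y^X$ to some $f_\infty \in \CC(K_\infty,U_\infty)$, there is $n_0$ with $f_n \in \CC(K_n,U_n)$ for all $n \geq n_0$. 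Unravelling the definition, $f_\infty \in \CC(K_\infty,U_\infty)$ means $f_\infty[K_\infty] \subseteq U_\infty$, and the goal $f_n \in \CC(K_n,U_n)$ means $f_n[K_n] \subseteq U_n$, i.e.\ $f_n(x) \in U_n$ for every $x \in K_n$.

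First I would reformulate everything in terms of the associated maps to $\IS$. Using the homeomorphism $\cf$ of Proposition~\ref{p:calO}\eqref{i:calO:homeomorphic} and the identification of $\calO(Y)$ with $\IS^Y$, the sequence $(U_n)_n$ converges to $U_\infty$ iff $(\cf(U_n))_n$ converges continuously to $\cf(U_\infty)$ in $\IS^Y$, by Lemma~\ref{l:funcspaces:Seq}. Composing, the maps $x \mapsto \cf(U_n)(f_n(x))$ should converge continuously to $x \mapsto \cf(U_\infty)(f_\infty(x))$ as elements of $\IS^X$, since continuous convergence is preserved under such composition (this is essentially that composition is continuous in $\Seq$, which follows from cartesian closedness via Lemma~\ref{l:funcspaces:Seq}). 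Equivalently, setting $V_n := \{x \in X \mid f_n(x) \in U_n\} = f_n^{-1}[U_n] \in \calO(X)$ and $V_\infty := f_\infty^{-1}[U_\infty]$, the sequence $(V_n)_n$ converges to $V_\infty$ in $\calO(X)$, and by hypothesis $K_\infty \subseteq V_\infty$, i.e.\ $K_\infty \in \Box V_\infty$.

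Now I would exploit the convergence of $(K_n)_n$ to $K_\infty$ in $\KuV(X)$. Since $\KuV(X)$ carries the sequentialisation of the upper Vietoris topology and $\Box V_\infty$ is a subbasic upper-Vietoris-open set containing $K_\infty$, if $\Box V_\infty$ were sequentially open we would immediately get $K_n \in \Box V_\infty$, hence $K_n \subseteq V_\infty$, for large $n$ — but that is not quite enough since we need $K_n \subseteq V_n$, not $K_n \subseteq V_\infty$. The right tool is rather the characterisation of convergence in $\calO(X)$ from Proposition~\ref{p:calO}\eqref{i:calO:cup:cap}: from $(V_n)_n \to V_\infty$ we get that $W_k := \bigcap_{n \geq k} V_n \cap V_\infty$ is open and $V_\infty = \bigcup_k W_k$, so $(W_k)_k$ is an increasing open cover of $K_\infty$. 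The key step is then to show that $\bigcup_k \Box W_k$ is a sequentially open subset of $\KuV(X)$ containing $K_\infty$ — one checks it is upward-saturated under the convergence relation and stable under increasing unions in the appropriate sense, or more directly that $K \mapsto \bigcup_k \Box W_k$ behaves like an $\omega$-Scott-open condition pulled back along $K \mapsto \{V \in \calO(X) \mid K \subseteq V\}$. Granting this, convergence of $(K_n)_n$ gives some $k_0$ with $K_n \in \Box W_{k_0}$ for all large $n$, i.e.\ $K_n \subseteq W_{k_0} \subseteq V_n$ for $n \geq k_0$, which is exactly $f_n[K_n] \subseteq U_n$.

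The main obstacle is the final topological step: showing that a set of the form $\bigcup_k \Box W_k$, for an increasing sequence of opens $(W_k)_k$, is sequentially open in $\KuV(X)$ — equivalently, that whenever a sequence $(K_n)_n$ converges in $\KuV(X)$ to a countably compact set $K_\infty$ contained in $\bigcup_k W_k$, almost all $K_n$ are contained in a single $W_k$. This is where countable compactness of $K_\infty$ is essential (to reduce the increasing cover to a finitely-indexed, hence single, member) and where one must be careful that convergence in the \emph{sequentialisation} of the upper Vietoris topology, rather than in the upper Vietoris topology itself, is what is available; I expect to handle it by the same $\INu \times X \to \IS$ transpose technique used in the proof of Proposition~\ref{p:calO}, packaging the data $(K_n, f_n, U_n)$ into a single sequentially continuous map and invoking sequential compactness of the tails $\{n, n{+}1, \dots\} \cup \{\infty\}$ in $\INu$ together with countable compactness of $K_\infty$.
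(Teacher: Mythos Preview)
Your approach is correct and the ``main obstacle'' you worry about dissolves immediately: since $K_\infty$ is countably compact and $(W_k)_k$ is an increasing open cover of it, there is a \emph{fixed} $k_0$ with $K_\infty \subseteq W_{k_0}$. The single subbasic set $\Box W_{k_0}$ is upper-Vietoris-open, hence open in $\KuV(X)$, so convergence of $(K_n)_n$ gives $K_n \in \Box W_{k_0}$ for all $n \geq N$. For $n \geq \max\{N,k_0\}$ you then have $K_n \subseteq W_{k_0} \subseteq V_n$, which is $f_n[K_n] \subseteq U_n$ as desired. No transpose trick or sequential-openness of $\bigcup_k \Box W_k$ is needed, and your ``equivalently'' in the last paragraph is not quite an equivalence --- but what you actually need is the stronger statement, and it holds for the simple reason just given.

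Your route genuinely differs from the paper's. You push the data through the functions, forming $V_n := f_n^{-1}[U_n] \in \calO(X)$ and using continuity of composition (cartesian closedness of $\Seq$) to get $(V_n)_n \to V_\infty$; then you invoke Proposition~\ref{p:calO}\eqref{i:calO:cup:cap} in $\calO(X)$ and finish with countable compactness of $K_\infty$ alone. The paper instead stays on the $Y$-side: it first finds $n_1$ with $K_n \subseteq f_\infty^{-1}[V_\infty]$ for $n \geq n_1$, forms the single countably compact set $L := K_\infty \cup \bigcup_{i \geq n_1} K_i$ (using that such a tail union is again countably compact), pushes $L$ forward to $M := f_\infty[L] \subseteq V_\infty$, applies Proposition~\ref{p:calO}\eqref{i:calO:cup:cap} in $\calO(Y)$ to trap $M$ in some $W_{n_2}$, and finally uses that $\CC(L,W_{n_2})$ is open in the countably-compact-open topology on $Y^X$ (Lemma~\ref{l:funcspaces:Seq}(b)) to catch almost all $f_n$. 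Your argument is arguably cleaner in that it avoids the auxiliary claim that $L$ is countably compact and does not need the countably-compact-open description of $Y^X$; the paper's argument, on the other hand, makes explicit a useful structural fact about tails of convergent sequences in $\KuV(X)$.
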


\pagebreak[3]
\begin{proof}
 Let $(K_n)_n$ converge to $K_\infty$ in $\KuV(X)$,
 let $(V_n)_n$ converge to $V_\infty$ in $\calO(Y)$,
 let $(f_n)_n$ converge to $f_\infty$ in $Y^X$
 and let $f_\infty \in \CC(K_\infty,V_\infty)$.
 Since $K_\infty \subseteq U_\infty:=f_\infty^{-1}[V_\infty] \in \calO(X)$,
 there is some $n_1$ such that $K_n \subseteq U_\infty$ for all $n \geq n_1$.
 From the fact that $(K_n)_n$ converges to $K_\infty$
 one can easily deduce that the set $L:=K_\infty \cup \bigcup_{i \geq n_1} K_i$ is countably compact in $X$.
 Hence $M:=f_\infty[L] \subseteq V_\infty$ is countably compact in~$Y$.
 By Proposition~\ref{p:calO}\eqref{i:calO:cup:cap} the set
 $W_m:=V_\infty \cap \bigcap_{n \geq m} V_n$ is open in $Y$ for all $m \in \IN$
 and $V_\infty=\bigcup_{m\in \IN} W_m$.
 Therefore there is some $n_2 \in \IN$ with $M \subseteq W_0 \cup \dotsc \cup W_{n_2}$,
 hence $M \subseteq W_{n_2}$ and $f_\infty \in \CC(L,W_{n_2})$.
 Since the countably-compact-open topology induces the convergence relation on $Y^X$
 by Lemma~\ref{l:funcspaces:Seq},
 there is some $n_3 \in \IN$ such that $f_n \in \CC(L,W_{n_2})$ for all $n \geq n_3$.
 For all $n \geq \max\{n_1,n_2,n_3\}$ we have $K_n \subseteq L$, $W_{n_2} \subseteq V_n$
 and thus $f_n \in \CC(L,W_{n_2}) \subseteq \CC(K_n,V_n)$.
 We conclude that $\big(\CC(K_n,V_n)\big)_n$ converges to $\CC(K_\infty,V_\infty)$
 in $\calO(Y^X)$.
 Hence $\CC$ is sequentially continuous.
\end{proof}


\subsection{Sequentially Hausdorff sequential spaces}\label{sub:seqHaus}

We discuss sequentially Hausdorff spaces.
A topological space is called \emph{sequentially Hausdorff}, if any convergent sequence has a unique limit.
Hausdorff spaces are sequentially Hausdorff and sequentially Hausdorff spaces are $T_1$, 
whereas the converse does not hold in either case (cf.\ \cite{En89}). 
Sequentially Hausdorff sequential spaces are called $\mathcal{L}^*$-spaces by R.~Engelking in \cite{En89}.

The following lemma summerises essential properties of sequentially Hausdorff sequential spaces.

\begin{lemma}\label{l:props:seqHaus}
Let $X$ be a sequential space that is sequentially Hausdorff.
\begin{enumerate}
 \item \label{i:neq:continuous}
  The inequality function $\mathit{neq}\colon X \times X \to \IS$
  defined by $\mathit{neq}(x,y)=\top :\Longleftrightarrow x \neq y$ is sequentially continuous.
 \item \label{i:X->O(X)}
  The map $x \mapsto X \setminus \{x\}$ is a continuous function from $X$ to $\calO(X)$.
 \item \label{i:OO(X)->O(X)}
  The map $H \mapsto X \setminus \bigcap(H)$ is a continuous function from $\calO(\calO(X))$ to $\calO(X)$.
 \item \label{i:props:seqHaus:Hnxn}
  Let $(H_n)_n$ converge to $H_\infty$ in $\calO(\calO(X))$
  and let $(x_n)_n$ converge to $x_\infty$ in~$X$.
  If $x_n \in \bigcap(H_n)$ for all $n \in \IN$, then $x_\infty \in \bigcap(H_\infty)$.
\end{enumerate}
\end{lemma}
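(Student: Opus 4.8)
The plan is to establish the four items in turn; item~\eqref{i:neq:continuous} feeds into \eqref{i:X->O(X)}, and a single elementary observation about $\bigcap(H)$ then reduces both \eqref{i:OO(X)->O(X)} and \eqref{i:props:seqHaus:Hnxn} to cartesian closedness and Proposition~\ref{p:calO}. For \eqref{i:neq:continuous}, note that $X\times X$ is a sequential space, so continuity and sequential continuity coincide, and that every sequence in $\IS$ converges to $\bot$; hence it suffices to show that if $(x_n,y_n)_n$ converges to $(x_\infty,y_\infty)$ with $x_\infty\neq y_\infty$, then $x_n\neq y_n$ for almost all $n$. If this failed, some subsequence would satisfy $x_{n_k}=y_{n_k}$; but then the sequence $(x_{n_k})_k=(y_{n_k})_k$ would converge both to $x_\infty$ and to $y_\infty$, contradicting sequential Hausdorffness. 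This is the only point at which uniqueness of limits (as opposed to the $T_1$ property) is invoked.

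For \eqref{i:X->O(X)}: since $X$ is $T_1$ the complement $X\setminus\{x\}$ is open, so the map is well defined. Let $g\colon X\to\IS^X$ be the transpose of $\mathit{neq}$; by \eqref{i:neq:continuous} and the cartesian closedness of $\Seq$, $g$ is continuous, and $g(x)=\cf(X\setminus\{x\})$ since $g(x)(y)=\mathit{neq}(x,y)=\top$ exactly when $y\in X\setminus\{x\}$. Composing $g$ with the inverse of the homeomorphism $\cf\colon\calO(X)\to\IS^X$ of Proposition~\ref{p:calO}\eqref{i:calO:homeomorphic} yields the continuous map $x\mapsto X\setminus\{x\}$. (Alternatively one verifies sequential continuity directly via Proposition~\ref{p:calO}\eqref{i:calO:convrel}, using that in a sequentially Hausdorff sequential space a convergent sequence together with its limit forms a closed set.)

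The observation driving \eqref{i:OO(X)->O(X)} and \eqref{i:props:seqHaus:Hnxn} is that for every $H\in\calO(\calO(X))$ and every $x\in X$,
\[
  x\notin\bigcap(H)\quad\Longleftrightarrow\quad X\setminus\{x\}\in H ,
\]
because $x\notin\bigcap(H)$ says $U\subseteq X\setminus\{x\}$ for some $U\in H$, which, as $H$ is upwards closed and $X\setminus\{x\}$ is open, holds iff $X\setminus\{x\}\in H$ itself. Writing $\phi\colon X\to\calO(X)$ for the continuous map of \eqref{i:X->O(X)}, this means $X\setminus\bigcap(H)=\phi^{-1}[H]$; in particular $X\setminus\bigcap(H)$ is open, so the map of \eqref{i:OO(X)->O(X)} is well defined. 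Transporting along the homeomorphisms $\cf$ of Proposition~\ref{p:calO}\eqref{i:calO:homeomorphic} for $X$ and for $\calO(X)$ (the latter being itself a sequential space, so that the Proposition applies), the map $H\mapsto X\setminus\bigcap(H)$ becomes the precomposition map $\IS^{\calO(X)}\to\IS^X$, $\,p\mapsto p\circ\phi$, which is continuous in the cartesian closed category $\Seq$, being the transpose of the composite $\IS^{\calO(X)}\times X\to\IS^{\calO(X)}\times\calO(X)\to\IS$ of $\mathrm{id}\times\phi$ with the evaluation map. This proves \eqref{i:OO(X)->O(X)}.

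Finally, for \eqref{i:props:seqHaus:Hnxn} suppose toward a contradiction that $x_\infty\notin\bigcap(H_\infty)$; then $X\setminus\{x_\infty\}\in H_\infty$ by the displayed equivalence. By \eqref{i:X->O(X)} the sequence $(X\setminus\{x_n\})_n$ converges to $X\setminus\{x_\infty\}$ in $\calO(X)$, and $(H_n)_n$ converges to $H_\infty$ in $\calO(\calO(X))$; applying Proposition~\ref{p:calO}\eqref{i:calO:convrel} with $\calO(X)$ in the role of $X$ yields $X\setminus\{x_n\}\in H_n$ for almost all $n$, i.e.\ $x_n\notin\bigcap(H_n)$ for almost all $n$, contradicting the hypothesis. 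Hence $x_\infty\in\bigcap(H_\infty)$. I expect the only real (and mild) obstacle to be noticing the equivalence $x\notin\bigcap(H)\Leftrightarrow X\setminus\{x\}\in H$: without it, showing directly that $X\setminus\bigcap(H)$ is open is awkward, since a sequence lying in an open set need not converge into it, whereas once it is in hand, items \eqref{i:OO(X)->O(X)} and \eqref{i:props:seqHaus:Hnxn} follow immediately from cartesian closedness and the convergence description in Proposition~\ref{p:calO}.
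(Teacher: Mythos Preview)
Your proof is correct and follows essentially the same route as the paper: item~(1) via the subsequence-with-equal-coordinates argument, item~(2) by transposing $\mathit{neq}$, and items~(3) and~(4) via the key equivalence $x\notin\bigcap(H)\Leftrightarrow X\setminus\{x\}\in H$ combined with cartesian closedness and Proposition~\ref{p:calO}\eqref{i:calO:convrel}. The only difference is expository: you spell out that $H\mapsto X\setminus\bigcap(H)$ becomes the precomposition map $p\mapsto p\circ\phi$ under the $\cf$-homeomorphisms, whereas the paper simply asserts continuity of $t(H)=\{x\mid X\setminus\{x\}\in H\}$ ``by cartesian closedness and~(1)'' and then verifies $t(H)=X\setminus\bigcap(H)$.
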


\begin{proof}
\begin{enumerate}
 \item
  Let $(x_n)_n$ converge to $x_\infty$ and $(y_n)_n$ converge to $y_\infty$ in $X$.
  We have to show that $(\mathit{neq}(x_n,y_n))_n$ converges to $\mathit{neq}(x_\infty,y_\infty)$ in $\IS$.
  The only interesting case is 
  that there exists a strictly increasing function $\varphi\colon \IN \to \IN$
  such $\mathit{neq}(x_{\varphi(n)},y_{\varphi(n)})=\bot$ for all $n \in \IN$. 
  Then we have $x_{\varphi(n)}=y_{\varphi(n)}$ for all $n \in \IN$.
  Since $X$ is sequentially Hausdorff, this implies $x_\infty=y_\infty$, 
  hence $\mathit{neq}(x_\infty,y_\infty)=\bot$, as required.
 \item
  By \eqref{i:neq:continuous} and by cartesian closedness of $\Seq$, 
  the function $s\colon X \to \IS^X$ defined by $s(x)(y):=\mathit{neq}(x,y)$ is continuous.
  Clearly, $s(x)$ is the characteristic function of the open set $X \setminus \{x\}$.  
 \item
  By the cartesian closedness of $\Seq$ and by (1), 
  the function $t\colon \calO(\calO(X)) \to \calO(X)$
  defined by $t(H):=\{ x \in X \,|\, (X \setminus \{x\}) \in H \}$ is continuous.
  If $x \in t(H)$ then clearly $x \notin \bigcap(H)$.
  Conversely, if $x \notin \bigcap(H)$, then there is some $V \in H$ with $x \notin V$,
  implying $X \setminus \{x\} \in H$, as $V \subseteq X \setminus \{x\}$.
  Therefore $t(H) = X \setminus \bigcap(H)$.
  Note that this shows that $\bigcap(H)$ is closed.
 \item
  Let $U_n:=X \setminus \{x_n\}$ for all $n \leq \infty$.
  By (1), $(U_n)_n$ converges to $U_\infty$ in $\calO(X)$. 
  Suppose for contradiction that $x_\infty \notin \bigcap(H_\infty)$.
  Then $U_\infty \in H_\infty$.
  By Proposition~\ref{p:calO}\eqref{i:calO:convrel} there is some $n_0 \in \IN$ 
  such that $U_n \in H_n$ for all $n \geq n_0$, 
  implying $x_{n_0} \notin \bigcap(H_{n_0})$, a contradiction. 
\end{enumerate}
\end{proof}

We remark that sequential Hausdorffness is necessary for all statements in Lemma~\ref{l:props:seqHaus},
taking into account that 
Statement \eqref{i:X->O(X)} comprises that $\{x\}$ is closed for all $x \in X$
and Statement \eqref{i:OO(X)->O(X)} comprises that $\bigcap(H)$ is closed for all $\omega$-Scott open sets $H$.


\section{A version of the Hofmann-Mislove Theorem for $\omega$-Scott open sets} \label{sec:HofmannMislove}

We now formulate and prove our version of the Hofmann-Mislove Theorem for $\omega$-Scott open sets.

We equip the set of non-empty $\omega$-Scott open sets
with the subspace topology inherited from $\calO(\calO(X))$
and denote the resulting space by $\calOOp(X)$.
This space is sequential, because the singleton $\{\emptyset\}$ 
is closed in the space $\calO(\calO(X))$ which is itself sequential 
by carrying the $\omega$-Scott topology (cf.\ Section~\ref{sub:open}).
Recall that $\KuV(X)$ is the space of countably-compact subsets of $X$ 
equipped with the sequentialisation of the upper Vietoris topology 
(cf.\ Section~\ref{sub:compact}).

\begin{theorem} \label{th:K:retract:OO}
 Let $X$ be a sequentially Hausdorff sequential space.
 Then the space $\KuV(X)$ is a continuous retract of $\calOOp(X)$.
 The map $H \mapsto \bigcap(H)$ is a continuous retraction 
 to the continuous section $K \mapsto \{ U \in \calO(X) \,|\, K \subseteq U\}$.
\end{theorem}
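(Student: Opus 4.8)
The plan is to verify four things: (i) the purported section $\sigma\colon K \mapsto K^{\subseteq} = \{U \in \calO(X) \mid K \subseteq U\}$ lands in $\calOOp(X)$, i.e.\ $K^{\subseteq}$ is a non-empty $\omega$-Scott open set; (ii) the retraction $\rho\colon H \mapsto \bigcap(H)$ lands in $\KuV(X)$, i.e.\ $\bigcap(H)$ is countably compact; (iii) $\rho \circ \sigma = \mathrm{id}_{\KuV(X)}$; and (iv) both $\sigma$ and $\rho$ are continuous. For (i): $K^{\subseteq}$ is upwards closed by definition, and if $\calD$ is a countable directed family of opens with $\bigcup\calD \supseteq K$, then since $K$ is countably compact the countable open cover $\calD$ of $K$ has a finite subcover, and by directedness some single $D \in \calD$ already covers $K$, so $D \in K^{\subseteq} \cap \calD$; non-emptiness holds since $X \in K^{\subseteq}$. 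For (iii): $\rho(\sigma(K)) = \bigcap\{U \in \calO(X) \mid K \subseteq U\}$, which equals the closure $\overline{K}$ in general but equals $K$ here because $X$ is $T_1$ (every point outside $K$ is separated from $K$ by the open set $X \setminus \{x\}$, using $\{x\}$ closed)—and one checks $\overline{K}$ is again countably compact, so this is the identity on $\KuV(X)$ as a set; since $\KuV(X)$ carries a sequentialisation, the set-level identity is automatically continuous, but we will get its continuity for free from (iv) anyway.

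For (iv), continuity of $\sigma$: I would factor it as $\sigma(K) = \CC(K, \mathord{\cdot})$ in the sense that $K^{\subseteq} = \{U \in \calO(X) \mid \cf(U)[K] \subseteq \{\top\}\} = \CC(K, \{\top\})$ under the homeomorphism $\calO(X) \cong \IS^X$ of Proposition~\ref{p:calO}\eqref{i:calO:homeomorphic}. Then Proposition~\ref{p:comp-op}, applied with $Y = \IS$ and the fixed open set $\{\top\} \in \calO(\IS)$, says $K \mapsto \CC(K,\{\top\})$ is sequentially continuous from $\KuV(X)$ to $\calO(\IS^X) \cong \calO(\calO(X))$; composing with the inclusion into $\calO(\calO(X))$ and noting the image is non-empty, we land sequentially continuously in $\calOOp(X)$. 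Since $\KuV(X)$ is sequential, sequential continuity suffices.

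For (iv), continuity of $\rho\colon H \mapsto \bigcap(H)$: I would first observe that by Lemma~\ref{l:props:seqHaus}\eqref{i:OO(X)->O(X)} the map $H \mapsto X \setminus \bigcap(H)$ is continuous from $\calO(\calO(X))$ to $\calO(X)$, which in particular shows $\bigcap(H)$ is a \emph{closed} set, hence a well-defined subset of $X$; combined with (ii) it lands in $\calK(X)$. To get continuity into $\KuV(X)$ with its sequentialised upper Vietoris topology, it suffices (since the source $\calOOp(X)$ is sequential) to show sequential continuity, and since $\KuV(X)$ carries the sequentialisation of the upper Vietoris topology, it is enough to check: whenever $(H_n)_n \to H_\infty$ in $\calOOp(X)$, the sequence $(\bigcap(H_n))_n$ converges to $\bigcap(H_\infty)$ in the upper Vietoris topology, i.e.\ for each subbasic $\Box U$ with $\bigcap(H_\infty) \subseteq U$ we have $\bigcap(H_n) \subseteq U$ eventually. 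Suppose not; then along a subsequence there are points $x_n \in \bigcap(H_n) \setminus U$. Here is where countable compactness of $L := \bigcup_n \{x_n\}$-type arguments and sequential Hausdorffness enter: extract a convergent subsequence $x_{n_k} \to x_\infty$ using that $\bigcap(H_\infty)$ (or an auxiliary countably compact set) contains a cluster point; then $x_\infty \notin U$ since $X \setminus U$ is closed, yet Lemma~\ref{l:props:seqHaus}\eqref{i:props:seqHaus:Hnxn} forces $x_\infty \in \bigcap(H_\infty) \subseteq U$, a contradiction. Finally $\rho(\sigma(K)) = K$ from (iii) completes the retract claim.

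\textbf{Main obstacle.} The delicate point is the continuity of the intersection map $\rho$ into the \emph{upper Vietoris} topology rather than merely into $\calO(X)$: one must produce, from a failure of $\bigcap(H_n) \subseteq U$, an actual convergent sequence of witness points to which Lemma~\ref{l:props:seqHaus}\eqref{i:props:seqHaus:Hnxn} applies. Extracting that convergent subsequence is not automatic—countable compactness gives cluster points of sequences but, in a merely sequential space, a cluster point of a sequence need not be a sequential limit of a subsequence in general; so the argument must exploit sequentiality of $X$ (and Dependent Choice) to pass from ``$x_\infty$ is a cluster point of $(x_n)$ lying in the countably compact set $\bigcap(H_\infty)$'' to ``some subsequence converges to a limit outside $U$.'' Getting this extraction right, uniformly enough that the hypotheses of Lemma~\ref{l:props:seqHaus}\eqref{i:props:seqHaus:Hnxn} (which needs $x_n \in \bigcap(H_n)$ for the matching indices) are met, is the crux of the proof.
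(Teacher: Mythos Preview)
Your plan correctly identifies the structure of the argument and even pinpoints the exact obstacle, but it leaves that obstacle unresolved---and the same gap also leaves your step~(ii) unproven. The extraction of a convergent subsequence from the witnesses $x_n \in \bigcap(H_n) \setminus U$ cannot be done via countable compactness of $\bigcap(H_\infty)$ or of any auxiliary set: the $x_n$ live in the \emph{varying} sets $\bigcap(H_n)$, and (as you yourself note) cluster points need not be subsequential limits in a merely sequential space. The paper's key idea, which you are missing, is Lemma~\ref{l:TheConvergingSubsequenceLemma}: a sequence $(x_n)_n$ has a subsequence converging into an open set $V$ if and only if $(X \setminus \Cls\{x_n\})_n$ \emph{fails} to converge to $V$ in $\calO(X)$. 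This recasts the extraction problem as a convergence question in $\calO(X)$, which is then settled by applying Proposition~\ref{p:calO}\eqref{i:calO:convrel} one level up, in $\calO(\calO(X))$: from $x_n \in \bigcap(H_n)$ we get $X \setminus \{x_n\} \notin H_n$, and since $(H_n)_n \to H_\infty$ with $V \in H_\infty$ (any such $V$ will do), the sequence $(X \setminus \{x_n\})_n$ cannot converge to $V$ in $\calO(X)$; hence the desired subsequence exists and Lemma~\ref{l:props:seqHaus}\eqref{i:props:seqHaus:Hnxn} finishes. Applying the same mechanism to the constant sequence $H_n = H$ also yields your deferred step~(ii): $\bigcap(H)$ is sequentially compact, hence countably compact (Corollary~\ref{c:bigcap(H):sequentiallycompact}).

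A minor slip in~(iii): $\bigcap\{U \in \calO(X) \mid K \subseteq U\}$ is the \emph{saturation} of $K$ in the specialisation order, not its closure; your argument that it equals $K$ in a $T_1$ space is nonetheless correct. Your route to continuity of the section via Proposition~\ref{p:comp-op} is a pleasant alternative to the paper's direct verification in Lemma~\ref{l:K:embeds:into:OO}.
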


\noindent
Note that the condition of $X$ being sequentially Hausdorff is essential,
see Example~\ref{ex:seqHaus:essential}.

We prove Theorem~\ref{th:K:retract:OO} by a series of lemmas.
The following embedding lemma follows in the case of qcb$_0$-spaces from \cite[Proposition 4.4.9]{Sch:phd}.
By a sequential embedding we mean a continuous injection that reflects convergent sequences.

\begin{lemma}\label{l:K:embeds:into:OO}
 For any sequential $T_0$-space $X$ the map $e\colon \KuV(X) \to \calO(\calO(X))$ 
 defined by $e(K):=\{ U \in \calO(X) \,|\, K \subseteq U\}$ is a sequential embedding.
\end{lemma}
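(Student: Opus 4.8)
The plan is to verify three things in turn: that $e$ is well-defined, that it is injective, and that it is a continuous injection reflecting convergent sequences; for a sequential space the last property is what "sequential embedding" demands.

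\textbf{Well-definedness and injectivity.} First I would note that for any countably compact $K$ the collection $e(K) = K^\subseteq = \{U \in \calO(X) \mid K \subseteq U\}$ is a non-empty $\omega$-Scott-open subset of $\calO(X)$: it is clearly upwards closed, and if $\calD$ is a countable directed family of opens with $\bigcup\calD \supseteq K$, then $\calD$ is a countable open cover of the countably compact set $K$, so by directedness some single member of $\calD$ already contains $K$. Hence $e(K) \in \calO(\calO(X))$. For injectivity I would use the $T_0$ hypothesis: if $K \neq L$ are countably compact, then (say) some point $x \in K \setminus L$; since $X$ is $T_0$ and $L$ is compact\ldots actually the cleanest argument is that $K \not\subseteq L$ forces $K \notin e(L)$ while $K \in e(K)$, \emph{provided} $L \in e(K)$ would fail — so instead pick $x \in K \setminus L$ and find an open $U$ with $L \subseteq U$, $x \notin U$; such a $U$ exists because in a $T_0$ sequential space one can separate a point from a compact set not containing it. If that separation is delicate, an alternative is to observe that countably compact subsets of a $T_0$ space are determined by the opens containing them (two distinct ones must differ on some open set), which is exactly the statement $e(K) = e(L) \Rightarrow K = L$. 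I expect this to be the subtle point and would route it through the fact that $\KuV(X)$ is explicitly a $T_0$-space when $X$ is $T_1$, but here only $T_0$ is assumed, so I would need the separation argument.

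\textbf{Continuity.} Since $\calO(\calO(X))$ carries the $\omega$-Scott topology, which by Proposition~\ref{p:calO}\eqref{i:calO:sequentialisation} is sequential and equal to the sequentialisation of the (countably-)compact-open topology, it suffices to show $e$ is sequentially continuous, i.e.\ if $(K_n)_n \to K_\infty$ in $\KuV(X)$ then $(e(K_n))_n \to e(K_\infty)$ in $\calO(\calO(X))$. By Proposition~\ref{p:calO}\eqref{i:calO:convrel}, applied to the space $\calO(X)$ in place of $X$, this convergence holds iff for every sequence $(U_n)_n$ converging in $\calO(X)$ to some $U_\infty \in e(K_\infty)$, eventually $U_n \in e(K_n)$. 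So assume $U_n \to U_\infty$ in $\calO(X)$ with $K_\infty \subseteq U_\infty$. By Proposition~\ref{p:calO}\eqref{i:calO:cup:cap} the sets $W_k = U_\infty \cap \bigcap_{n \geq k} U_n$ are open and $U_\infty = \bigcup_k W_k$; since $(W_k)_k$ is an increasing open cover of the countably compact set $K_\infty$, some $W_{k_0} \supseteq K_\infty$, i.e.\ $K_\infty \subseteq W_{k_0}$, an open subset of $U_\infty$. Now $W_{k_0} \in e(K_\infty)$ and $W_{k_0} \subseteq U_\infty$, but I still need $K_n \subseteq U_n$ eventually. Here I use that $(K_n)_n \to K_\infty$ in the sequentialised upper Vietoris topology: the set $\Box W_{k_0} = \{K \in \calK(X) \mid K \subseteq W_{k_0}\}$ is upper-Vietoris-open hence sequentially open and contains $K_\infty$, so $K_n \subseteq W_{k_0}$ for all large $n$; and $W_{k_0} \subseteq U_n$ for all $n \geq k_0$ by definition of $W_{k_0}$. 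Combining, $K_n \subseteq U_n$ for all large $n$, i.e.\ eventually $U_n \in e(K_n)$, as needed.

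\textbf{Reflecting convergent sequences.} Conversely, suppose $(e(K_n))_n \to e(K_\infty)$ in $\calO(\calO(X))$; I must show $(K_n)_n \to K_\infty$ in $\KuV(X)$, i.e.\ that $(K_n)_n$ lies eventually in every sequentially-upper-Vietoris-open neighbourhood of $K_\infty$. Since that topology is the sequentialisation of the one generated by the $\Box U$, it is enough to handle a subbasic neighbourhood $\Box U$ with $K_\infty \subseteq U$: then $U \in e(K_\infty)$, so $U$ lies in the $\omega$-Scott-open set\ldots no, I need a sequence-level statement, so instead argue via Proposition~\ref{p:calO}\eqref{i:calO:convrel} again: from $K_\infty \subseteq U$ and $U$ open, the constant sequence $(U)_n$ converges to $U$ in $\calO(X)$ with $U \in e(K_\infty)$, hence eventually $U \in e(K_n)$, i.e.\ $K_n \subseteq U$, i.e.\ $K_n \in \Box U$ eventually. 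Since finite intersections of $\Box$-sets behave the same way and the sequentialisation of a topology is generated (as a convergence structure) by this subbasis, $(K_n)_n$ converges to $K_\infty$. The main obstacle I anticipate is the injectivity step under the bare $T_0$ assumption, together with being careful that "reflects convergent sequences" for the sequentialised upper Vietoris topology can indeed be checked on the $\Box U$-subbasis — both are manageable but require a clean separation/generation argument rather than a one-line appeal.
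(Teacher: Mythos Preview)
Your continuity and reflection arguments are correct and match the paper's almost line for line. The worry about testing reflection on the subbasis $\{\Box U\}$ is unnecessary: a sequence converges in a topology if and only if it converges in the sequentialisation of that topology, so convergence in $\KuV(X)$ coincides with convergence for the upper Vietoris topology, and the latter may be checked on any subbasis because ``eventually in'' is preserved under finite intersections.

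The genuine gap is in injectivity. Your claim that ``in a $T_0$ sequential space one can separate a point from a compact set not containing it'' is false: in the Sierpi\'nski space $\IS$ the set $\{\bot\}$ is compact and $\top \notin \{\bot\}$, yet the only open containing $\{\bot\}$ is the whole space; indeed $e(\{\bot\}) = \{\IS\} = e(\{\bot,\top\})$ there, so without further hypotheses $e$ is not injective. Your alternative (``countably compact subsets of a $T_0$ space are determined by the opens containing them'') is just a restatement of injectivity and fails for the same reason. The paper does not attempt any general separation principle. For $x \in K_1 \setminus K_2$ it takes the explicit open $X \setminus \Cls\{x\}$ and argues $K_2 \subseteq X \setminus \Cls\{x\}$ ``as $K_2$ is saturated''; then $X \setminus \Cls\{x\} \in e(K_2) \setminus e(K_1)$. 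So the two ingredients you are missing are (i) the concrete witness $X \setminus \Cls\{x\}$, and (ii) the saturation of the members of $\calK(X)$, which the paper invokes in the proof even though it is not spelled out in the definition. Under bare $T_0$ with arbitrary (unsaturated) countably compact sets the injectivity simply fails, as the Sierpi\'nski example shows; your instinct that the $T_0$-versus-$T_1$ issue is the crux was right, but the resolution is saturation, not a separation lemma.
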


\begin{proof}
 By Proposition~\ref{p:calO}\eqref{i:calO:sequentialisation}, $e(K)$ is $\omega$-Scott-open
 for all $K \in \KuV(X)$, hence $e(K) \in \calO(\calO(X))$.
 \\
 To show that $e$ is injective, let $K_1,K_2 \in \calK(X)$ with $K_1 \neq K_2$. 
 W.l.o.g.\ we can assume that there is some $x \in K_1 \setminus K_2$.
 Then $K_2 \subseteq X \setminus \Cls\{x\}$, as $K_2$ is saturated. 
 Hence $X \setminus \Cls\{x\} \in e(K_2) \setminus e(K_1)$. 
 Thus $e$ is injective.
 \\
 To show the continuity of $e$,
 let $(K_n)_n$ converge to $K_\infty$ in $\KuV(X)$
 and let $(U_n)_n$ converge in $\calO(X)$ to an open set $U_\infty \in e(K_\infty)$.
 By Proposition~\ref{p:calO}\eqref{i:calO:cup:cap} 
 and by countable compactness of $K_\infty \subseteq U_\infty$,
 there is some $k \in \IN$ such that $K_\infty \subseteq \bigcap_{i\geq k} U_i \cap U_\infty \in \calO(X)$.
 This implies that $(K_n)_n$ is eventually in $\Box(\bigcap_{i\geq k} U_i \cap U_\infty)$.
 This entails $K_n \subseteq U_n$ and $U_n \in e(K_n)$ for almost all $n$.
 Thus $(e(K_n))_n$ converges to $e(K_\infty)$ in $\calO(\calO(X))$.
 \\
 Conversely, let $(e(K_n))_n$ converge to $e(K_\infty)$ in $\calO(\calO(X))$
 and let $U$ be an open set with $K_\infty \in \Box U$.
 Then $U \in e(K_\infty)$, implying $U \in e(K_n)$ for almost all $n \in\IN$ by Proposition~\ref{p:calO}\eqref{i:calO:convrel}.
 This means that $(K_n)_n$ is eventually in the subbasic set $\Box U$.
 We conclude that $(K_n)_n$ converge to $K_\infty$ in $\KuV(X)$.
\end{proof}

The following lemma is the key observation for showing countable compactness of $\bigcap(H)$.

\begin{lemma}\label{l:TheConvergingSubsequenceLemma}
 Let $X$ be a sequential space. Let $U$ be open and let $(x_n)_n$ be a sequence of $X$.
 Then $(x_n)_n$ has a subsequence that converges to an element in~$U$ 
 if, and only if,
 $(X \setminus \Cls\{x_n\})_n$ does not converge to~$U$ in $\calO(X)$.
\end{lemma}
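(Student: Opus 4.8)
The plan is to prove both directions by unwinding the characterisation of convergence in $\calO(X)$ with respect to the $\omega$-Scott topology, which by Proposition~\ref{p:calO}\eqref{i:calO:convrel} coincides with convergence in the compact-open topology and is describable via the pointwise ``escaping'' condition: $(V_n)_n$ fails to converge to $V_\infty$ precisely when there is a sequence $(y_n)_n$ converging to some $y_\infty \in V_\infty$ such that $y_n \notin V_n$ for infinitely many $n$. Here $V_n = X \setminus \Cls\{x_n\}$, so $y_n \notin V_n$ means $y_n \in \Cls\{x_n\}$.

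First I would prove the ``only if'' direction. Suppose $(x_{n_k})_k$ is a subsequence converging to some $x_\infty \in U$. I would build a single sequence $(y_n)_n$ converging to $x_\infty$ that witnesses the failure of convergence of $(X \setminus \Cls\{x_n\})_n$ to $U$: set $y_n := x_n$ when $n$ is one of the indices $n_k$, and set $y_n := x_\infty$ otherwise. This sequence converges to $x_\infty$ (a sequence obtained by interleaving a convergent subsequence with the constant tail $x_\infty$ still converges to $x_\infty$), its limit lies in $U$, and for each index $n = n_k$ we have $y_n = x_n \in \Cls\{x_n\}$, so $y_n \notin X \setminus \Cls\{x_n\}$. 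Since there are infinitely many such indices, Proposition~\ref{p:calO}\eqref{i:calO:convrel} tells us $(X \setminus \Cls\{x_n\})_n$ does not converge to $U$ in $\calO(X)$.

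Next I would prove the contrapositive of the ``if'' direction. Assume $(X \setminus \Cls\{x_n\})_n$ does not converge to $U$. By Proposition~\ref{p:calO}\eqref{i:calO:convrel} there is a sequence $(y_n)_n$ converging to some $y_\infty \in U$ and infinitely many $n$ with $y_n \in \Cls\{x_n\}$. Restricting to those infinitely many indices, we get a subsequence $(x_{n_k})_k$ together with points $y_{n_k} \in \Cls\{x_{n_k}\}$ and $(y_{n_k})_k$ still converges to $y_\infty \in U$. The point $y_{n_k} \in \Cls\{x_{n_k}\}$ means that every open set containing $y_{n_k}$ also contains $x_{n_k}$; hence for any open $W \ni y_\infty$, since $(y_{n_k})_k \to y_\infty$ we have $y_{n_k} \in W$ eventually, and then $x_{n_k} \in W$ eventually as well. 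Thus $(x_{n_k})_k$ converges to $y_\infty \in U$, giving the desired convergent subsequence with limit in $U$.

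The main obstacle is the bookkeeping in the ``only if'' direction: I must make sure the interleaved sequence $(y_n)_n$ genuinely converges to $x_\infty$ (this is where one uses that $X$ is a sequential, hence in particular first-countable-free but still sequentially well-behaved, space — actually only the elementary fact that a sequence whose terms are eventually in every neighbourhood of $x_\infty$ converges there is needed, and this holds in any space). A subtler point is that in the ``if'' direction one should be careful that $\Cls\{x_n\}$ is used correctly: the condition $y_n \in \Cls\{x_n\}$ is equivalent to $x_n$ being in the closure of $\{y_n\}$ only in $T_1$ spaces, but we do not need that equivalence — we directly use that every neighbourhood of a point in $\Cls\{x_n\}$ meets, indeed contains, $x_n$, which is just the definition of the closure. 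No choice principle beyond extracting a subsequence along an explicitly given infinite set of indices is required.
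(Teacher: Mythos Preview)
Your proof is correct and follows essentially the same approach as the paper, unwinding Proposition~\ref{p:calO}\eqref{i:calO:convrel} in both directions; the ``if'' direction is virtually identical. In the ``only if'' direction the paper is slightly slicker: instead of building an interleaved witness sequence, it observes directly that the convergent subsequence $(x_{\varphi(n)})_n$ witnesses the non-convergence of the subsequence $(X\setminus\Cls\{x_{\varphi(n)}\})_n$ to $U$, and hence the full sequence $(X\setminus\Cls\{x_n\})_n$ cannot converge to $U$ either.
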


\begin{proof}
\emph{If-part:}
 If $(X \setminus \Cls\{x_n\})_n$ does not converge to $U$ in $\calO(X)$,
 then by Proposition~\ref{p:calO}\eqref{i:calO:convrel}
 there is a convergent sequences $(a_n)_n \to a_\infty$ in $X$ such that $a_\infty \in U$
 and $a_n \notin X \setminus \Cls\{x_n\}$ for infinitely many $n \in \IN$.
 So there is a strictly increasing function $\varphi\colon \IN \to \IN$  
 with $a_{\varphi(n)} \in \Cls\{x_{\varphi(n)}\}$ for all $n \in \IN$.
 As $(a_{\varphi(n)})_n$ converges to $a_\infty$ and $x_{\varphi(n)}$ is contained in any neighbourhood of $a_{\varphi(n)}$, 
 $(x_{\varphi(n)})_n$ converges to $a_\infty$ as well. 
\\ 
 \emph{Only-if-part:} 
  Let $(x_{\varphi(n)})_n$ be a subsequence of $(x_n)_n$ converging to some element $x_\infty \in U$.
  By Proposition~\ref{p:calO}\eqref{i:calO:convrel}
  neither $(X \setminus \Cls\{x_{\varphi(n)}\})_n$ nor $(X \setminus \Cls\{x_n\})_n$
  converges to $U$ in $\calO(X)$, because $x_{\varphi(n)} \notin X \setminus \Cls\{x_{\varphi(n)}\}$
  for all $n \in \IN$.
\end{proof}

\begin{lemma}\label{l:ConvergingSubsequences:in:H_n}
 Let $X$ be a sequentially Hausdorff sequential space.
 Let $(H_n)_n$ converge to $H_\infty$ in $\calOOp(X)$ and
 let $(x_n)_n$ be a sequence such that $x_n \in \bigcap(H_n)$ for all $n \in \IN$.
 Then $(x_n)_n$ has a subsequence converging to some element $x_\infty \in \bigcap(H_\infty)$.
\end{lemma}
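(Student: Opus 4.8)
The plan is to prove this by contradiction, after first noting that it suffices to extract \emph{any} convergent subsequence of $(x_n)_n$. Indeed, once a subsequence $(x_{\varphi(n)})_n$ converges to some $x_\infty\in X$, the sequences $(H_{\varphi(n)})_n$ and $(x_{\varphi(n)})_n$ still meet the hypotheses of Lemma~\ref{l:props:seqHaus}\eqref{i:props:seqHaus:Hnxn}: $(H_{\varphi(n)})_n$ converges to $H_\infty$ as a subsequence of a convergent sequence, $(x_{\varphi(n)})_n$ converges to $x_\infty$, and $x_{\varphi(n)}\in\bigcap(H_{\varphi(n)})$ for every $n$. That lemma then places $x_\infty\in\bigcap(H_\infty)$ automatically, so the whole task reduces to producing a convergent subsequence of $(x_n)_n$.

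To this end I would assume, towards a contradiction, that $(x_n)_n$ has no convergent subsequence, and set $V_n:=X\setminus\Cls\{x_n\}$ for each $n$. Since $X$ is sequentially Hausdorff it is $T_1$, so $\Cls\{x_n\}=\{x_n\}$ and each $V_n$ is open. Applying Lemma~\ref{l:TheConvergingSubsequenceLemma} with the open set there taken to be $U=X$ --- where ``$(x_n)_n$ has a subsequence converging to an element of $X$'' just means ``$(x_n)_n$ has a convergent subsequence'' --- the assumption forces $(V_n)_n$ to converge to $X$ in $\calO(X)$.

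Next I would use that $X\in H_\infty$, because $H_\infty$ is a non-empty upwards closed subset of the complete lattice $(\calO(X);\subseteq)$ and hence contains its greatest element $X$. As $(H_n)_n$ converges to $H_\infty$ in $\calOOp(X)$, hence in $\calO(\calO(X))$, Proposition~\ref{p:calO}\eqref{i:calO:convrel}, applied with $\calO(X)$ in place of $X$, yields from $(V_n)_n\to X$ and $X\in H_\infty$ an index $n_0$ with $V_n\in H_n$ for all $n\geq n_0$. But $x_n\in\bigcap(H_n)$ then forces $x_n\in V_n=X\setminus\{x_n\}$ for those $n$, which is absurd. This contradiction shows $(x_n)_n$ has a convergent subsequence, and by the first paragraph its limit lies in $\bigcap(H_\infty)$, completing the proof.

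I do not anticipate a genuine technical obstacle: the argument is short, and its one real idea is the reduction in the first paragraph, which trades the goal ``find a subsequence converging \emph{into} $\bigcap(H_\infty)$'' for the much weaker ``find any convergent subsequence'', after which Lemma~\ref{l:TheConvergingSubsequenceLemma} and the convergence of $(H_n)_n$ finish it. The only point needing a second glance is that the instance $U=X$ of Lemma~\ref{l:TheConvergingSubsequenceLemma} is admissible --- it is, since neither its statement nor its proof restricts $U$.
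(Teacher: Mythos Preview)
Your proof is correct and uses the same ingredients as the paper's: the sets $V_n=X\setminus\{x_n\}$, Proposition~\ref{p:calO}\eqref{i:calO:convrel} applied to $\calO(X)$, Lemma~\ref{l:TheConvergingSubsequenceLemma}, and Lemma~\ref{l:props:seqHaus}\eqref{i:props:seqHaus:Hnxn}. The only difference is packaging: the paper argues directly by picking any $U\in H_\infty$, observing $V_n\notin H_n$, concluding via Proposition~\ref{p:calO}\eqref{i:calO:convrel} that $(V_n)_n$ does \emph{not} converge to $U$, and then invoking Lemma~\ref{l:TheConvergingSubsequenceLemma} to extract the subsequence --- whereas you run the contrapositive as a contradiction argument with the particular choice $U=X$.
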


\begin{proof}
 Choose some open set $U$ contained in $H_\infty$.
 For any $n \in \IN$ the open set $U_n:= X \setminus \{x_n\}=X \setminus \Cls\{x_n\}$ is not in $H_n$,
 because $x_n \in \bigcap(H_n)$.
 By Proposition~\ref{p:calO}\eqref{i:calO:convrel} applied to $X':=\calO(X)$,
 $(U_n)_n$ does not converge to $U$ in $\calO(X)$, 
 because $(H_n)_n$ converges to $H_\infty$ in $\calO( \calO(X) )$.
 By Lemma~\ref{l:TheConvergingSubsequenceLemma}, 
 $(x_n)_n$ has a subsequence $(x_{\varphi(n)})_n$ that converges to some element $x_\infty \in U$.
 Applying Lemma~\ref{l:props:seqHaus}\eqref{i:props:seqHaus:Hnxn} to $(H_{\varphi(n)})_n$,  
 we obtain $x_\infty \in \bigcap(H_\infty)$.
\end{proof}

Lemma~\ref{l:ConvergingSubsequences:in:H_n} implies that $H \mapsto \bigcap H$ 
maps non-empty $\omega$-Scott-open sets to sequentially compact sets 
(in sequentially Hausdorff sequential spaces).
As sequentially compact sets are countably compact (cf.\ \cite{En89}), we obtain:

\begin{corollary}\label{c:bigcap(H):sequentiallycompact}
 Let $X$ be a sequentially Hausdorff sequential space. 
 Then for every non-empty $\omega$-Scott-open collection $H$ of opens
 the intersection $\bigcap(H)$ is both sequentially compact and countably compact.
\end{corollary}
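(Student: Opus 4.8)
The plan is to derive the corollary directly from Lemma~\ref{l:ConvergingSubsequences:in:H_n} by specialising it to a constant sequence of $\omega$-Scott-open collections. First I would fix a non-empty $\omega$-Scott-open collection $H$ and let $(x_n)_n$ be an arbitrary sequence with $x_n \in \bigcap(H)$ for all $n \in \IN$; the goal is to produce a subsequence converging to a point of $\bigcap(H)$. To apply the lemma I set $H_n := H$ for every $n \leq \infty$. Since a constant sequence converges to its value in any topological space, $(H_n)_n$ converges to $H_\infty = H$ in $\calOOp(X)$, and by assumption $x_n \in \bigcap(H_n)$ for all $n$. Lemma~\ref{l:ConvergingSubsequences:in:H_n} then yields a subsequence $(x_{\varphi(n)})_n$ converging to some $x_\infty \in \bigcap(H_\infty) = \bigcap(H)$.

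This establishes that $\bigcap(H)$ is sequentially compact: every sequence taking values in $\bigcap(H)$ has a subsequence converging to a point of $\bigcap(H)$. To conclude, I would invoke the standard fact that sequentially compact subsets of a topological space are countably compact (cf.\ \cite{En89}): given a countable open cover of $\bigcap(H)$ admitting no finite subcover, one extracts a sequence in $\bigcap(H)$ that avoids every finite union of cover members, and any convergent subsequence of it would eventually lie in a single cover member, a contradiction. Hence $\bigcap(H)$ is countably compact as well.

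I do not expect a genuine obstacle, since the corollary is essentially a restatement of Lemma~\ref{l:ConvergingSubsequences:in:H_n}. The only points deserving a moment's care are verifying that the constant sequence $(H)_n$ indeed converges in $\calOOp(X)$ (immediate, as $\calOOp(X)$ is a topological space) and noting that the degenerate case $\bigcap(H) = \emptyset$ — which does arise, e.g.\ for $H = \calO(X)$ — is harmless, the empty set being trivially both sequentially compact and countably compact.
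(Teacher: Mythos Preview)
Your proposal is correct and matches the paper's own argument: the paper simply remarks that Lemma~\ref{l:ConvergingSubsequences:in:H_n} (applied to the constant sequence $H_n=H$) yields sequential compactness of $\bigcap(H)$, and then cites \cite{En89} for the implication from sequential compactness to countable compactness. Your added remarks on the convergence of constant sequences and the degenerate case $\bigcap(H)=\emptyset$ are harmless elaborations of the same route.
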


The next example shows that sequential Hausdorffness is essential 
in Corollary~\ref{c:bigcap(H):sequentiallycompact},
even in the case of countably based, locally compact $T_1$-spaces.

\begin{example}\label{ex:seqHaus:essential} \rm
 We consider the space $X$ which has $\IN \cup \{\infty_1,\infty_2\}$ as its underlying set
 and whose topology is generated by the basis
 \[ 
  \mathcal{B}:=
  \big\{ \{n\}, \{a,\infty_1 \,|\, a\geq n \}, \{a,\infty_2 \,|\, a\geq n \} \,\big|\, n \in \IN \big\}
  \,.
 \]
 Clearly, the sequence of natural numbers converges both to $\infty_1$ and to $\infty_2$ in~$X$.
 So all sets in the basis $\mathcal{B}$ are compact, implying that $X$ is locally compact
 and
 \[
  H:=\big\{ U \in \calO(X) \,\big|\, \IN \cup \{\infty_1\} \subseteq U \big\}
     \cup 
     \big\{ U \in \calO(X) \,\big|\, \IN \cup \{\infty_2\} \subseteq U \big\}
 \]
 is an $\omega$-Scott-open family of opens.
 But $\bigcap(H)$ is not countably compact by being equal to the set $\IN$. $\Box$
\end{example}

Now we show that $\bigcap$ is even continuous in our case.

\begin{lemma}\label{l:bigcap:continuous}
 Let $X$ be a sequentially Hausdorff sequential space. 
 Then the map $\mathalpha{\bigcap}\colon \calOOp(X) \to \KuV(X)$, 
 $H \mapsto \bigcap(H)$, is continuous.
\end{lemma}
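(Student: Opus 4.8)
The plan is to verify sequential continuity of $\bigcap$ directly, using the fact that both $\calOOp(X)$ and $\KuV(X)$ are sequential spaces, so it suffices to show that $\bigcap$ preserves convergent sequences. Let $(H_n)_n$ converge to $H_\infty$ in $\calOOp(X)$; I must show $(\bigcap H_n)_n$ converges to $\bigcap H_\infty$ in $\KuV(X)$. Since $\KuV(X)$ carries the sequentialisation of the upper Vietoris topology, and a sequentially open set in that topology is in particular sequentially open, it is enough to show that $(\bigcap H_n)_n$ is eventually in every subbasic set $\Box U$ containing $\bigcap H_\infty$; equivalently, whenever $\bigcap H_\infty \subseteq U$ for an open $U$, then $\bigcap H_n \subseteq U$ for almost all $n$.

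First I would argue by contradiction: suppose there is an open $U$ with $\bigcap H_\infty \subseteq U$ but $\bigcap H_n \not\subseteq U$ for infinitely many $n$. Passing to a subsequence, pick $x_n \in \bigcap H_n \setminus U$ along that subsequence, and for the remaining indices pick $x_n \in \bigcap H_n$ arbitrarily (which is possible since $H_n$ is non-empty and $\bigcap H_n$ is non-empty — here I should note $\bigcap H_n \neq \emptyset$ because $H_n$ being non-empty and upwards closed means $X \in H_n$, but more to the point Corollary~\ref{c:bigcap(H):sequentiallycompact} gives a non-empty countably compact set; actually I need $\bigcap H_n \neq \emptyset$, which follows since $\bigcap H_n$ is a closed countably compact set and, hmm, it could a priori be empty — but if $\emptyset \in H_n$ then $H_n = \calO(X)$ and $\bigcap H_n = \emptyset$; I should instead argue that if $\bigcap H_\infty \subseteq U$ fails to be inherited, the witnesses $x_n$ along the bad subsequence already suffice). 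Applying Lemma~\ref{l:ConvergingSubsequences:in:H_n} to the bad subsequence $(H_{n})$ and the points $(x_n)$, I extract a further subsequence $(x_{\varphi(n)})_n$ converging to some $x_\infty \in \bigcap H_\infty$. Since $x_\infty \in \bigcap H_\infty \subseteq U$ and $U$ is open, $x_{\varphi(n)} \in U$ for almost all $n$; but the $x_{\varphi(n)}$ were chosen outside $U$, a contradiction. Hence $\bigcap H_n \subseteq U$ for almost all $n$, as required.

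Finally I would combine this with Corollary~\ref{c:bigcap(H):sequentiallycompact}, which guarantees that $\bigcap H$ actually lands in $\calK(X)$ (indeed in the sequentially compact subsets), so the codomain $\KuV(X)$ is correct. Putting the two together: $(\bigcap H_n)_n$ is a sequence in $\KuV(X)$ that is eventually inside every upper-Vietoris subbasic neighbourhood of $\bigcap H_\infty$, hence eventually inside every finite intersection of such, hence converges to $\bigcap H_\infty$ for the upper Vietoris topology and a fortiori for its sequentialisation. Therefore $\bigcap$ preserves convergent sequences, and since $\calOOp(X)$ is sequential, $\bigcap$ is continuous.

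The main obstacle I anticipate is the bookkeeping around non-emptiness and the passage to subsequences: Lemma~\ref{l:ConvergingSubsequences:in:H_n} is stated for a sequence $(x_n)_n$ with $x_n \in \bigcap(H_n)$ for all $n$, so when I restrict to a bad subsequence I must make sure the hypothesis "$(H_n)_n$ converges to $H_\infty$" survives restriction — which it does, since subsequences of convergent sequences converge to the same limit — and that I have genuinely chosen a point of $\bigcap H_n$ for every index of the restricted sequence. The rest is a routine contradiction argument once the converging-subsequence lemma is invoked.
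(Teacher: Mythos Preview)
Your proposal is correct and follows essentially the same route as the paper: assume failure of sequential continuity at some subbasic neighbourhood $\Box U$, pick witnesses $x_n\in\bigcap(H_n)\setminus U$ along a bad subsequence, invoke Lemma~\ref{l:ConvergingSubsequences:in:H_n} to extract a subsequence converging to some $x_\infty\in\bigcap(H_\infty)\subseteq U$, and derive a contradiction. The paper's write-up is terser (it absorbs the passage to a subsequence into the negation of sequential continuity and does not digress on non-emptiness), but the argument is the same.
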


\begin{proof}
 Suppose the contrary.
 Since $\calOOp(X)$ is sequential, $\bigcap$ is not sequentially continuous,
 hence there is a sequence $(H_n)_n$ converging to $H_\infty$ in $\calOOp(X)$ 
 and an open set $U$ such that $\bigcap(H_\infty) \in \Box U$, but $\bigcap(H_n) \notin \Box U$ for all $n \in \IN$.
 Therefore there exists some $x_n \in \bigcap(H_n) \setminus U$ for all $n \in \IN$.
 By Lemma~\ref{l:ConvergingSubsequences:in:H_n}, $(x_n)_n$ has a subsequence $(x_{\varphi(n)})_n$
 converging to some element $x_\infty \in \bigcap(H_\infty)$.
 Since $x_\infty \in U$, $(x_{\varphi(n)})_n$ is eventually in $U$, a contradiction.
\end{proof}

Now it is easy to prove Theorem~\ref{th:K:retract:OO}. 

\smallskip

\begin{proof} (Theorem~\ref{th:K:retract:OO})
 By Lemmas~\ref{l:K:embeds:into:OO} and~\ref{l:bigcap:continuous},
 the maps $e\colon K \mapsto \{U \in \calO(X) \,|\, K \subseteq U \}$
 and $\mathalpha{\bigcap}\colon H \mapsto \bigcap(H)$ are continuous.
 Let $K \in \KuV(X)$.
 Clearly, $K \subseteq \bigcap( e(K) )$.
 To show the reverse inclusion, let $x \in \bigcap( e(K) )$. 
 Then $X \setminus \{x\} \notin e(K)$.
 Since $X$ is sequentially Hausdorff, $X \setminus \{x\}$ is open,
 hence $K \nsubseteq X \setminus \{x\}$ and $x \in K$.
 We conclude $\bigcap( e(K) ) = K$.
 Therefore $\KuV(X)$ is a continuous retract of $\calOOp(X)$
 with $\bigcap$ as a continuous retraction to the section $e$.
\end{proof}

Theorem~\ref{th:K:retract:OO} implies the following two corollaries.

\begin{corollary}\label{c:calOOp->omegaSOFilter}
 Let $X$ be sequentially Hausdorff and sequential.
 Then there is a continuous retraction 
 from the family of all non-empty $\omega$-Scott-open subsets of $\calO(X)$
 to the family of all non-empty $\omega$-Scott-open filters of $\calO(X)$.
\end{corollary}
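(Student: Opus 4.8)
The plan is to extract the retraction directly from Theorem~\ref{th:K:retract:OO}. Write $e\colon K\mapsto\{U\in\calO(X)\mid K\subseteq U\}$ and $\bigcap\colon H\mapsto\bigcap(H)$ for the section and the retraction of that theorem, let $\calF(X)\subseteq\calOOp(X)$ denote the subspace of those non-empty $\omega$-Scott-open collections that are \emph{filters}, and consider the composite $\rho:=e\circ\bigcap\colon\calOOp(X)\to\calOOp(X)$. First I would record that $\rho$ is continuous, being the composite of the continuous maps $\bigcap$ (Lemma~\ref{l:bigcap:continuous}) and $e$ (Lemma~\ref{l:K:embeds:into:OO}); and since $\bigcap\circ e=\mathrm{id}_{\KuV(X)}$ by Theorem~\ref{th:K:retract:OO}, $\rho$ is idempotent, because $\rho\circ\rho=e\circ(\bigcap\circ e)\circ\bigcap=e\circ\bigcap=\rho$.

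Next I would verify that $\rho$ takes values in $\calF(X)$. For every $K\in\KuV(X)$ the collection $e(K)$ is non-empty (it contains $X$), upwards closed, and closed under finite intersections, since $K\subseteq U$ and $K\subseteq V$ give $K\subseteq U\cap V$; and $e(K)$ is already $\omega$-Scott-open by Lemma~\ref{l:K:embeds:into:OO}. Hence $e(K)\in\calF(X)$, and therefore the image of $\rho$ is contained in $\calF(X)$.

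The heart of the argument is to show that $\rho$ restricts to the identity on $\calF(X)$, i.e.\ that $e(\bigcap(H))=H$ for every non-empty $\omega$-Scott-open filter $H$. The inclusion $H\subseteq e(\bigcap(H))$ is immediate, since each $U\in H$ contains $\bigcap(H)$. For the reverse inclusion I must establish a Hofmann--Mislove-type statement: if $U\in\calO(X)$ with $\bigcap(H)\subseteq U$, then $U\in H$. Here I would use that $X$ is sequentially Hausdorff, so by Lemma~\ref{l:props:seqHaus}\eqref{i:OO(X)->O(X)} one has $\bigcap(H)=\{x\in X\mid X\setminus\{x\}\notin H\}$; the hypothesis $\bigcap(H)\subseteq U$ then says $X\setminus\{x\}\in H$ for every $x\notin U$, whence $X\setminus F\in H$ for every finite $F\subseteq X\setminus U$ because $H$ is a filter. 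To convert this into $U\in H$ I would pass to the closed (hence sequential and sequentially Hausdorff) subspace $X\setminus U$, together with the trace filter $\{W\cap(X\setminus U)\mid W\in H\}$; one checks that this trace is again a non-empty $\omega$-Scott-open filter of opens of $X\setminus U$ (using that $W\cup U\in H$ whenever $W\in H$), and one then derives a contradiction from its intersection being empty, exploiting countable compactness of $\bigcap(H)$ from Corollary~\ref{c:bigcap(H):sequentiallycompact} and Dependent Choice.

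Granting these three points, $\rho$ is a continuous idempotent self-map of $\calOOp(X)$ whose image is exactly $\calF(X)$ and which fixes $\calF(X)$ pointwise, so $\rho$ is a continuous retraction of $\calOOp(X)$ onto $\calF(X)$ with section the subspace inclusion, which is the assertion. The main obstacle is the third step. The two set-inclusions and the formal bookkeeping around $\rho$ are routine consequences of Theorem~\ref{th:K:retract:OO}; the genuine content is the choice-free Hofmann--Mislove lemma that a non-empty $\omega$-Scott-open filter is recovered from its intersection. This is precisely the place where $\omega$-Scott-openness (rather than mere Scott-openness) and Dependent Choice (rather than Zorn's Lemma) must be used, and getting that deduction right is where I expect the difficulty to lie.
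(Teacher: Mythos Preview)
The paper offers no proof of this corollary at all; it merely lists it as a consequence of Theorem~\ref{th:K:retract:OO}. The implicit argument is therefore exactly the one you set up: take $\rho=e\circ\bigcap$, which is continuous and idempotent with image contained in the filters. On that structural level your proposal and the paper agree, and your steps~1 and~2 are unproblematic.

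The issue is your step~3, which you rightly single out as the real content: for $\rho$ to retract onto \emph{all} of $\calF(X)$ one must show $e(\bigcap H)=H$ for every non-empty $\omega$-Scott-open \emph{filter} $H$, i.e.\ the $\omega$-Hofmann--Mislove statement for filters. Your trace-filter reduction is sound and does produce, on $Y=X\setminus U$, a non-empty $\omega$-Scott-open filter $H'$ with $\emptyset\notin H'$ and $\bigcap(H')=\emptyset$. But the contradiction you announce does not follow from the tools you invoke: Corollary~\ref{c:bigcap(H):sequentiallycompact} applied to $H'$ only says that $\bigcap(H')$ is sequentially and countably compact, which the empty set trivially is; and countable compactness of $\bigcap(H)\subseteq U$ tells you nothing about the closed complement $Y$. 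What is actually needed is the independent lemma ``a non-empty $\omega$-Scott-open filter omitting $\emptyset$ has non-empty intersection,'' and none of Lemmas~\ref{l:TheConvergingSubsequenceLemma}--\ref{l:bigcap:continuous} yields this (Lemma~\ref{l:ConvergingSubsequences:in:H_n} requires points $x_n\in\bigcap(H_n)$ to begin with, which is exactly what is unavailable). So your diagnosis of where the difficulty lies is accurate, but the sketch does not close the gap --- and, to be fair, the paper does not close it either; it simply asserts the corollary.
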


As in qcb-spaces compactness and countable compactness agree and
$\omega$-Scott-open sets are Scott-open in the open set lattice, we get:

\begin{corollary}\label{c:qcb:bigcapH:compact}
 Let $X$ be a sequentially Hausdorff qcb-space.
 Then $\bigcap(H)$ is compact for every non-empty Scott-open subset $H \subseteq \calO(X)$.
\end{corollary}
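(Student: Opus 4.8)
The plan is to derive Corollary~\ref{c:qcb:bigcapH:compact} directly from Theorem~\ref{th:K:retract:OO} together with the identifications of topologies established in Section~\ref{sub:open}. First I would observe that a qcb-space is in particular a sequential space, so that Theorem~\ref{th:K:retract:OO} applies verbatim once we know $X$ is sequentially Hausdorff. The theorem then tells us that the map $H \mapsto \bigcap(H)$ is a (continuous) retraction from $\calOOp(X)$ onto $\KuV(X)$; in particular, for \emph{every} non-empty $\omega$-Scott-open collection $H$ of opens, the set $\bigcap(H)$ lies in $\KuV(X)$, i.e.\ it is a countably compact subset of $X$. Equivalently, one can invoke Corollary~\ref{c:bigcap(H):sequentiallycompact} directly to get that $\bigcap(H)$ is countably compact.

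The second ingredient is to bridge ``Scott-open'' and ``$\omega$-Scott-open''. Here I would appeal to the remark in Section~\ref{sub:open} that for hereditarily Lindel\"of spaces one has $\ScottTop = \omegaScottTop$, together with the fact (cited there via \cite[Proposition 3.3.1]{Sch:phd}, and also used in the proof of Proposition~\ref{p:calO}\eqref{i:calO:qcb}) that every qcb-space is hereditarily Lindel\"of. Consequently, on the open-set lattice $\calO(X)$ of a qcb-space every Scott-open subset is automatically $\omega$-Scott-open, so the hypothesis ``$H$ is a non-empty Scott-open subset of $\calO(X)$'' is a special case of ``$H$ is a non-empty $\omega$-Scott-open collection of opens'', and the preceding paragraph applies to such $H$.

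Finally I would upgrade ``countably compact'' to ``compact'': again because a qcb-space is hereditarily Lindel\"of, every countably compact subset is in fact compact (every open cover has a countable subcover, which in turn has a finite subcover), exactly as noted in Section~\ref{sub:compact} where $\calK(X)$ is said to coincide with the family of compact subsets for qcb$_0$-spaces. Chaining these three observations gives: $H$ non-empty Scott-open $\Rightarrow$ $H$ non-empty $\omega$-Scott-open $\Rightarrow$ $\bigcap(H)$ countably compact $\Rightarrow$ $\bigcap(H)$ compact. I do not expect any real obstacle here: the entire argument is a bookkeeping exercise assembling results already proved. The only point requiring a moment's care is making sure $X$ is genuinely sequential so that Theorem~\ref{th:K:retract:OO} and Corollary~\ref{c:bigcap(H):sequentiallycompact} are applicable—this is immediate since qcb-spaces are quotients of countably based spaces and hence sequential.
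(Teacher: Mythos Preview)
Your proposal is correct and follows essentially the same route as the paper, which simply notes that in qcb-spaces countable compactness coincides with compactness and that the Scott and $\omega$-Scott topologies on $\calO(X)$ agree, so the corollary is immediate from Theorem~\ref{th:K:retract:OO} (or Corollary~\ref{c:bigcap(H):sequentiallycompact}). One small simplification: the implication ``Scott-open $\Rightarrow$ $\omega$-Scott-open'' holds in \emph{any} space since $\ScottTop \subseteq \omegaScottTop$, so you do not actually need the hereditarily Lindel\"of property for that step---it is only needed to upgrade countable compactness to compactness.
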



\section*{Acknowledgement}
I thank Matthew de Brecht for posing me the question. 
I had fruitful discussions about the subject with Victor Selivanov and Matthew de Brecht.

\end{document}